\documentclass[letterpaper, 10 pt, conference]{ieeeconf}

\IEEEoverridecommandlockouts                              
\usepackage{amsmath,amssymb,amsfonts,xcolor}

\usepackage{stmaryrd}
\usepackage{mathrsfs}
\usepackage{array}
\usepackage{multirow}
\usepackage{bm}
\usepackage{tikz}
\usepackage{pgfplots}
\usepackage{cite}
\usepgfplotslibrary{groupplots}
\pgfplotsset{compat=1.18}



\usepackage{amsthm}

\def\idty{{\operatorname{Id}}}

\def\cF{{\mathcal F}}
\def\cZ{{\mathcal Z}}
\def\N{{\mathbb N}}

\def\R{{\mathbb R}}
\def\S{{\mathbb S}}

  \def\cG{{\mathcal G}} \def\cM{{\mathcal M}} \def\cS{{\mathcal S}}    \def\cN{{\mathcal N}}  \def\cC{{\mathcal C}}  \def\cI{{\mathcal I}}  \def\cU{{\mathcal U}}      \def\cE{{\mathcal E}}     \def\cF{{\mathcal F}}   \def\cR{{\mathcal R}} \def\cX{{\mathcal X}} \def\cY{{\mathcal Y}}  \def\cZ{{\mathcal Z}}

\theoremstyle{plain}
\newtheorem{theorem}{Theorem}[section] 
\newtheorem{proposition}[theorem]{Proposition}
\newtheorem{lemma}[theorem]{Lemma}

\theoremstyle{definition}

\newtheorem{definition}{Definition}
\newtheorem*{notations}{Notation}

\newtheorem{remark}[theorem]{Remark}

\newtheorem{assumption}[theorem]{Assumptions}

\usepackage{hyperref}
\hypersetup{
	pdfauthor={Cyprien Tamekue, Zongxi Yu, and ShiNung Ching},
	pdfsubject={Paper manuscrit},
	pdftitle={Minimum-energy steering control for control-affine systems via
		nonlinear controllability gramian},
	pdfkeywords={Control Synthesis, Nonlinear Control, },
	colorlinks=true,
	linkcolor=blue,    
	citecolor=blue,    
	urlcolor=blue      
}

\title{\LARGE \bf Minimum-Energy Control For Control-Affine Systems
}

\author{Cyprien Tamekue, Zongxi Yu and ShiNung Ching
	\thanks{This work is partially supported by grant R21MH132240 from the US National Institutes of Health to SC.}
	\thanks{The authors are with the Department of Electrical and Systems Engineering, Washington University in St. Louis, St. Louis, 63130, MO, USA,
		{\tt\small e-mail: \{cyprien,y.zongxi,shinung\}@wustl.edu.}}%
}

\begin{document}

	\maketitle
	\thispagestyle{empty}
	\pagestyle{empty}

	\begin{abstract}
		Fixed-horizon minimum-energy control for control-affine systems rarely admits closed-form formulas beyond the linear setting. In this article, under the assumption of unconstrained controls, we derive a fixed-horizon minimum-energy control for a broad class of control-affine systems as a fixed point of a Lagrange multiplier synthesis map. Compared with our earlier general steering synthesis, the present construction is based on a generally non-symmetric Gramian-like matrix and solves the associated $L^2$-energy problem. More specifically, under a comparison estimate criterion and a self-mapping assumption, we prove that the synthesis map has a unique fixed point in the feasible coercive classes, which can be computed via a standard Picard iteration. As a demonstration of concept, we use uniform complete controllability results for linear time-varying systems to derive a bracket-generating condition ensuring complete controllability for time-dependent planar control-affine systems with scalar input, and our minimum energy framework applies under standing assumptions. Special treatment for the unicycle kinematic model is also provided, and numerical examples illustrate the approach’s effectiveness.
	\end{abstract}
	


	\section{Introduction}
	\label{sec:introduction}
	Since Kalman's formal introduction of controllability for linear control systems~\cite{kalman1960general}, control analysis and synthesis for controlled dynamical systems have remained a central topic due to pervasive applications in engineering and science~\cite{coron2007control}. In the linear time-invariant (LTI) and linear time-varying (LTV) settings, controllability admits algebraic characterizations~\cite{silverman1967controllability} and yields constructive Gramian formulas, including fixed-horizon minimum-energy controls.
	
	For nonlinear control-affine systems, the situation is intrinsically geometric: reachability is governed by the endpoint map and by directions generated through Lie brackets and orbit-type constructions. Classical results such as the rank viewpoint~\cite{hermann2003nonlinear}, the general controllability framework~\cite{sussmann1972nonlinear}, the local controllability theorem~\cite{sussmann1987local}, and Nagano--Sussmann's orbit perspective~\cite{agrachev2013geometric} explain why no fixed, trajectory-independent Gramian can play the role of the LTI Gramian in general nonlinear systems. When controllability holds, constructive motion-planning approaches, including the homotopy-continuation method and path-lifting formulation~\cite{sussmann1992new,chitour2006continuation}, accordingly rely on the surjectivity of the differential of the endpoint map along a reference or homotopy path. We also note methods based on nilpotent approximations and interpolation entropy~\cite{jean2014nonholonomic,boizot2012motion,sussmann1992new}.
	
	This controllability and motion planning discussion is directly linked to energy minimization~\cite{boizot2012motion}. A fixed-horizon minimum-energy problem is an endpoint-constrained optimization problem: reachability is required for feasibility, while first-order optimality involves the adjoint of the endpoint map differential. In the LTI/LTV case, these ingredients collapse into the classical controllability Gramian inverse formula. In the nonlinear setting, one must instead use trajectory-dependent Gramian-like objects and coercivity conditions encoding endpoint-map surjectivity. Related energy viewpoints appear in nonlinear balancing and model reduction~\cite{scherpen1993balancing}. PMP-based methods characterize minimum-energy control through adjoint equations~\cite{boltyanski1960maximum}; in numerical synthesis, these necessary conditions are often treated as Hamiltonian boundary-value problems and solved by shooting or continuation methods~\cite{trelat2012aerospace,bonnard2011smooth}.
		Sastry and Montgomery~\cite{sastry1993structure}, for instance, differentiate PMP conditions for minimum-energy steering; in the autonomous drift-free case, they obtain that the minimum-energy control $u$ solves $\dot u=\Omega(t)u$ with $\Omega(t)^\top=-\Omega(t)$, so $|u(t)|$ is constant. Our fixed-point formula recovers this structure in that special case, while for drifted and time-dependent control-affine systems, it produces extra forcing terms, so the constant-magnitude property is not valid in general---as already noticed in~\cite{sastry1993structure}. 
		 Thus, the aim here is constructive Gramian synthesis, not only structural PMP characterization.
	
	Accordingly, we revisit the fixed-horizon endpoint-constrained
		problem of minimizing $\frac12\int_{t_0}^{T}|u(t)|^2\,dt$, subject to
		$x(t_0)=x^0$ and $x(T)=x^1$, through
		\emph{trajectory-dependent Gramians} for general control-affine
		dynamics
		\begin{equation}\tag{\textrm{$\Sigma$}}\label{eq:state-dependent-input}
			\dot{x}(t) = N_t(x(t)) + B_t(x(t))u(t),\quad x(t_0)=x^0,
		\end{equation}
		where $t\in[t_0,T]$, $x(t)\in\R^d$, $u(t)\in\R^k$ with $k\le d$,
		$N_t$ is a possibly nonautonomous drift, and $B_t$ is a time- and
		state-dependent input matrix.
	
	This article builds on the flow-conjugate representation and symmetric-Gramian steering map developed in~\cite{tamekue2025controlanalysis}, but addresses a different question: fixed-horizon $L^2$-energy minimization on feasible coercive classes. The new contributions are: (i) a Lagrange-multiplier fixed-point map, (ii) the mixed Gramian matrix, which is generally non-symmetric and is forced by the optimality equations, (iii) an existence and uniqueness result for synthesis fixed points with Picard convergence under a self-map condition and an additional iterate-contractivity condition, and (iv) a comparison with the earlier symmetric-Gramian steering control, including coincidence under an orthogonality condition. 
	Furthermore,  
	leveraging uniform complete controllability results for LTV systems~\cite{batista2017relaxed}, we provide a sufficient uniform Lie-bracket condition to verify controllability for time-dependent planar nonlinear control-affine systems with scalar controls, thereby providing regimes where the hypotheses of the minimum-energy synthesis can be verified. 
	Numerical examples, together with a companion implementation study~\cite{yu2026toolbox}, validate the approach and document its computational realization.
	
	The rest of the article is organized as follows. 
	Section~\ref{s:GA and FR} states the standing assumptions on $N_t$ and  $B_t$ in~\eqref{eq:state-dependent-input}. In Section~\ref{s:NVC}, we derive a solution representation tailored to control synthesis and introduce the associated trajectory-dependent Gramian maps. Section~\ref{s:FCC and MES} is devoted to minimum-energy control synthesis.
	Section~\ref{s:Appli and numerics} presents some applications and numerical illustrations. Section~\ref{s:conclusion} summarizes the results. Selected technical proofs are deferred to Appendix~\ref{s:proofs of technical results}. 

	\section{General assumptions and flow representation}\label{s:GA and FR}
	
	We introduce general assumptions on $N_t$ and $B_t$. 
	
	\begin{assumption}\label{ass:on N_t}
		The nonautonomous drift $N_t:[t_0, T]\times\R^d\to\R^d$ satisfies the following regularity assumptions:
		\begin{enumerate}
			\item The map $t\mapsto N_t(x)$ is $L^\infty$, for every fixed $x\in\R^d$.
			\item The map $x\mapsto N_t(x)$ is $C^2$, for every fixed $t\in[t_0, T]$.
		\end{enumerate}
		Additionally, for some constants $\Lambda_1\ge0$ and $\Lambda_2\ge0$, $\|D_xN_t(x)\|\le\Lambda_1$ and $\|D_x^2N_t(x)\|\le\Lambda_2$ for all $(t,x)\in[t_0, T]\times\R^d$. Here, $D_xN_t(x)$ and $D_x^2N_t(x)$ denote the first and second derivatives of $N_t$ with respect to $x$, respectively.
	\end{assumption}
	
	\begin{assumption}\label{ass:on B_t}
		The input matrix $B_t:[t_0, T]\times\R^d\to\R^{d\times k}$ belongs to $L^\infty((t_0, T)\times\R^d; \R^{d\times k})$. Furthermore, for a fixed $t\in[t_0, T]$, $x\mapsto B_t(x)$ is $C^2$, 
		and for some universal constants $L_B\ge0$ and $L_{DB}\ge0$, it holds $\|D_xB_t(x)\|\le L_B$ and $\|D_x^2B_t(x)\|\le L_{DB}$ for all $(t,x)\in[t_0, T]\times\R^d$.
	\end{assumption}
	
	
	\begin{remark}
		For a fixed $t$, $x\mapsto N_t(x)$ is not required to be bounded.
		Assumption~\ref{ass:on N_t} matches~\cite[Assumption~1.2]{tamekue2025controlanalysis}, while Assumption~\ref{ass:on B_t} is slightly more conservative than~\cite[Assumption~3.1]{tamekue2025controlanalysis} due to the global $C^2$ requirement and the uniform bound on $D_x^2B_t$.
		%
		These uniform bounds are mainly a technical convenience,
		and all derivative bounds are to be understood either globally on $\R^d$ or locally on a compact set containing the trajectories of interest over $[t_0, T]$. 
	\end{remark}
	
	
	
	
	\begin{notations}
		Throughout the following, we fix $t_0\ge 0$, $T>t_0$, $i\in\{1,2\}$ and $\tau_i\ge 0$, where $\tau_1=t_0$ and $\tau_2=T$. For ease in notations, we also introduce $\delta t_0:=T-t_0$, $\delta\tau_i:=T-\tau_i$, $\cX:=L^2((t_0, T); \R^k)$, $\cY:=L^\infty((t_0, T); \R^k)$ and $\|B\|_\infty:=\sup_{t\in[t_0,T]}\sup_{x\in\R^d}\|B_t(x)\|$.
		We use $|x|$ for the Euclidean norm of $x\in\R^n$ and $e_\ell$ the $\ell$th canonical basis vector of $\R^n$.
	\end{notations}

	By Assumption~\ref{ass:on N_t}, $N_t$ generates a nonautonomous flow $t\in[t_0, T]\mapsto\Phi_{t_0,t}$ and $(\Phi_{s, t})_{(s,t)\in[t_0, T]^2}$  forms a two-parameter family of diffeomorphism s.t. for any $x^0\in\R^d$, $\Phi_{s,t}(x^0)$ is the solution of (see, e.g., \cite[Chapter 2]{bressan2007introduction}) $\partial_t\Phi_{s,t}(x^0) = N_t\big(\Phi_{s,t}(x^0)\big)$ with $\Phi_{s,s}(x^0) = x^0$ for $s\le t$.
	We refer to~\cite[Section 2]{tamekue2025controlanalysis} for further algebraic identities satisfying $\Phi_{s,t}$, and \emph{a priori} general estimates involving its first $D\Phi_{s,t}(x^0)$ and second $D^2\Phi_{s,t}(x^0)$ derivative.

	\section{Representation of solutions and Gramians}\label{s:NVC}
	
	Unlike the Alekseev--Gr\"obner formula, our representation is a flow-conjugate integral form unconditionally equivalent to~\eqref{eq:state-dependent-input}, and thus a genuine nonlinear analog of the classical linear variation-of-constants formula. It fits naturally within the broad chronological calculus framework of~\cite{agravcev1979exponential}. The following statement is proven in~\cite[Theorem 3.2]{tamekue2025controlanalysis}.
	
	
	\begin{lemma}\label{thm:fb representation}
		For all $(x^0,u)\in\R^d\times \cY$, system~\eqref{eq:state-dependent-input} admits a unique absolutely continuous solution $x_u\in C^0([t_0, T]; \R^d)$ that can be represented by
		\begin{equation}\label{eq:fb representation}
			\begin{gathered}
				x_u(t)=\Phi_{\tau_i,t}\left(\Phi_{t_0,\tau_i}(x^0)+I_u(t)\right),\quad\forall t\in[t_0, T],\\
				I_u(t):=\int_{t_0}^{t}\!\!\! D\Phi_{s,\tau_i}\big(x_u(s)\big)B_s(x_u(s))u(s)\,ds.
			\end{gathered}
		\end{equation}
	\end{lemma}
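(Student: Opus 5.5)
Our plan has two parts: first establish well-posedness of \eqref{eq:state-dependent-input} by Carathéodory theory, then verify the representation \eqref{eq:fb representation} by differentiating a flow-conjugated quantity.

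\textbf{Step 1: existence and uniqueness.} For fixed $u\in\cY$, the vector field $F(t,x):=N_t(x)+B_t(x)u(t)$ has the following properties.
\begin{itemize}
\item It is measurable in $t$ and continuous in $x$.
\item By Assumptions~\ref{ass:on N_t} and~\ref{ass:on B_t} it is globally Lipschitz in $x$, with constant $\Lambda_1+L_B\|u\|_{L^\infty}$.
\item It grows at most linearly: $|F(t,x)|\le |N_t(0)|+\Lambda_1|x|+\|B\|_\infty\|u\|_{L^\infty}$, where $t\mapsto N_t(0)$ is $L^\infty$.
\end{itemize}
Carathéodory's theorem together with Gronwall's lemma then give a unique absolutely continuous solution $x_u$ on all of $[t_0,T]$, with no blow-up.

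\textbf{Step 2: the representation.} Fix $\tau_i$ and define $y(t):=\Phi_{t,\tau_i}(x_u(t))$, which pulls the trajectory back along the drift flow to time $\tau_i$. We will use the following identities for $\Phi$.
\begin{itemize}
\item The cocycle property $\Phi_{\tau_i,t}\circ\Phi_{t,\tau_i}=\idty$, which inverts the pullback.
\item The backward equation $\partial_s\Phi_{s,\tau}(z)=-D\Phi_{s,\tau}(z)\,N_s(z)$. This follows by differentiating $\Phi_{s,\tau}(\Phi_{t_0,s}(z_0))=\Phi_{t_0,\tau}(z_0)$ in $s$.
\end{itemize}
Here $\Phi_{s,t}$ is understood for all $s,t\in[t_0,T]$, including $t<s$, via backward solutions. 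The chain rule for absolutely continuous curves composed with the $C^1$ map $(s,z)\mapsto\Phi_{s,\tau_i}(z)$ then gives, for a.e.\ $t$,
\[
\dot y(t)=-D\Phi_{t,\tau_i}(x_u)N_t(x_u)+D\Phi_{t,\tau_i}(x_u)\bigl(N_t(x_u)+B_t(x_u)u(t)\bigr)=D\Phi_{t,\tau_i}(x_u(t))B_t(x_u(t))u(t).
\]
The drift terms cancel. Integrating from $t_0$, with $y(t_0)=\Phi_{t_0,\tau_i}(x^0)$, yields $y(t)=\Phi_{t_0,\tau_i}(x^0)+I_u(t)$. Applying $\Phi_{\tau_i,t}$ to both sides gives \eqref{eq:fb representation}.

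\textbf{Converse.} The representation is in fact equivalent to \eqref{eq:state-dependent-input}. Any continuous $x$ satisfying the integral identity has $y(t)=\Phi_{t,\tau_i}(x(t))$ absolutely continuous with the derivative above. Reversing the computation, and using invertibility of $D\Phi_{t,\tau_i}$, shows that $x$ solves \eqref{eq:state-dependent-input}. Uniqueness then transfers from Step~1.

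\textbf{Main obstacle.} The delicate point is justifying the chain rule and the backward equation at the available regularity. The drift $N_t$ is only $L^\infty$ in $t$, so $\Phi$ is merely absolutely continuous in its time arguments, not $C^1$. To handle this, I would:
\begin{enumerate}
\item use that $D\Phi$ solves the variational equation $\partial_t D\Phi_{s,t}=D_xN_t(\Phi_{s,t})D\Phi_{s,t}$, which gives continuity of $D\Phi$ and local Lipschitz bounds uniform in time;
\item verify the backward identity in integrated form;
\item differentiate $y$ via difference quotients, splitting
\[
\Phi_{t+h,\tau_i}(x_u(t+h))-\Phi_{t,\tau_i}(x_u(t))
\]
into a spatial increment and a temporal increment, and passing to the limit at Lebesgue points of $t\mapsto N_t$ and $u$.
\end{enumerate}
The remaining steps are routine.
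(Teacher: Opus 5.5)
Your proposal is correct and is essentially the argument behind the result the paper cites; the paper gives no proof here and defers to Theorem~3.2 of its companion work. That argument runs as follows: Carath\'eodory well-posedness, then differentiating the pulled-back curve $y(t)=\Phi_{t,\tau_i}(x_u(t))$ with the backward identity $\partial_s\Phi_{s,\tau}(z)=-D\Phi_{s,\tau}(z)N_s(z)$ so the drift cancels, and the reverse computation for the converse.

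The only tidying needed is what you already flagged. First, $(s,z)\mapsto\Phi_{s,\tau_i}(z)$ is not $C^1$ in $s$. Second, since the base point $x_u(t)$ moves with $t$, handle the temporal difference quotient at Lebesgue points of the single function $s\mapsto N_s(x_u(s))$, using the uniform bound $\Lambda_1$ to swap $N_s(x_u(t))$ for $N_s(x_u(s))$ at cost $\Lambda_1\sup_{|s-t|\le h}|x_u(s)-x_u(t)|$. Third, note that $y$ is absolutely continuous, which follows from the space and time Lipschitz bounds on $\Phi$, before you integrate its a.e.\ derivative.
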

	
	
	For fixed $x^0\in\R^d$ and control $u\in\cY$, let $x_u(\cdot):=x_{u,x^0}(\cdot)$ be the corresponding trajectory of~\eqref{eq:state-dependent-input} that can be represented by~\eqref{eq:fb representation}. Let $R_u(t,s)\in C^0([t_0, T]^2;\R^{d\times d})$ be the state-transition matrix (STM) of the linearized equation
	\begin{equation*}\label{eq:linearized equation B=0}
		\dot{y}(t) = D_x\big[N_t(x_u(t))+B_t(x_u(t))u(t)\big] y(t),\;t\in[t_0, T]
	\end{equation*}
	satisfying $R_u(s,s)=\idty$. Then, (see~\cite[Lemma 3.4]{tamekue2025controlanalysis})
	\begin{equation}\label{eq:factorization gen}
		R_u(t,s) = D\Phi_{t,\tau_i}(x_u(t))^{-1}\!M_{\tau_i,u}(t,s)\,D\Phi_{s,\tau_i}(x_u(s))
	\end{equation}
	for $(s,t)\in[t_0, T]$. Here, $M_{\tau_i,u}(t,s)$ is an STM of an ODE that we refer to~\cite[Lemma 3.4]{tamekue2025controlanalysis} for details. The proof of the following estimate uses the standard Gr\"onwall's lemma.
	\begin{lemma}\label{lem:estimate of the STM R_u}
		Let $(x^0,u)\in\R^d\times\cY$. It holds
		\begin{equation}\label{eq:estimate of the STM R_u}
			\hspace{-0.35cm}\begin{gathered}
				\|R_u(t,s)\|\le e^{\Lambda_{1,B}|t-s|},\qquad \forall(t,s)\in[t_0,T]^2,\\
				\|D_xR_u(t,s)\|\le\frac{\Lambda_{2,DB}}{\Lambda_{2,B}}\big(e^{2\Lambda_{1,B}|t-s|}-e^{\Lambda_{1,B}|t-s|}\big),
			\end{gathered}
		\end{equation}
		Here, $D_xR_u$ is the differential of $R_u$ w.r.t. the state, and
		\[
		\Lambda_{1,B}:=\Lambda_1+L_B\|u\|_\infty,\quad\Lambda_{2,DB}:=\Lambda_2+L_{DB}\|u\|_\infty.
		\]
	\end{lemma}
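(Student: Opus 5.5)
We view $R_u(t,s)$ as the solution, in the variable $t$, of the matrix ODE $\partial_t R_u(t,s)=A_u(t)R_u(t,s)$ with $R_u(s,s)=\idty$, where
\[
A_u(t):=D_xN_t(x_u(t))+D_x\big[B_t(x_u(t))u(t)\big].
\]
The first estimate is a Gr\"onwall argument. By Assumptions~\ref{ass:on N_t} and~\ref{ass:on B_t}, for a.e.\ $t$ we have
\[
\|A_u(t)\|\le \Lambda_1+L_B|u(t)|\le \Lambda_1+L_B\|u\|_\infty=\Lambda_{1,B}.
\]
Writing $R_u(t,s)=\idty+\int_s^t A_u(r)R_u(r,s)\,dr$ for $t\ge s$ gives
\[
\|R_u(t,s)\|\le 1+\Lambda_{1,B}\int_s^t\|R_u(r,s)\|\,dr ,
\]
and Gr\"onwall yields $\|R_u(t,s)\|\le e^{\Lambda_{1,B}(t-s)}$. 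For $t<s$ we integrate backwards, i.e.\ apply the same argument to $r\mapsto R_u(s-r+\cdot)$ (equivalently, use the backward equation $\partial_s R_u(t,s)=-R_u(t,s)A_u(s)$). Either way we obtain the bound with $|t-s|$.

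For the second estimate we must say what $D_xR_u$ means. The natural reading is differentiation with respect to the state along which the linearization is taken, i.e.\ with respect to the initial point (equivalently, a perturbation direction $h$ of the trajectory $x_u$). Differentiating the matrix ODE gives, for $W(t,s):=D_xR_u(t,s)[h]$, the inhomogeneous linear equation
\[
\partial_tW=A_u(t)W+\big(D_xA_u(t)[\,\cdot\,]\big)R_u(t,s),\qquad W(s,s)=0.
\]
Here $D_xA_u(t)$ involves $D_x^2N_t$ and $D_x^2B_t\,u(t)$ composed with the derivative of the trajectory with respect to the state. That derivative is itself $R_u(t,s)$-type and bounded by $e^{\Lambda_{1,B}|t-s|}$, so
\[
\|D_xA_u(t)\|\le \Lambda_{2,DB}\,e^{\Lambda_{1,B}|t-s|}.
\]
Variation of constants then gives
\[
W(t,s)=\int_s^t R_u(t,r)\,\big(D_xA_u(r)\big)R_u(r,s)\,dr .
\]
The main step is the bookkeeping of exponentials. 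Bounding naively gives $\int_s^t e^{\Lambda_{1,B}(t-r)}\Lambda_{2,DB}e^{\Lambda_{1,B}(r-s)}e^{\Lambda_{1,B}(r-s)}dr$. Its integrand is $\Lambda_{2,DB}e^{\Lambda_{1,B}(t-s)}e^{\Lambda_{1,B}(r-s)}$, so it integrates to
\[
\frac{\Lambda_{2,DB}}{\Lambda_{1,B}}\big(e^{2\Lambda_{1,B}(t-s)}-e^{\Lambda_{1,B}(t-s)}\big),
\]
which is exactly the stated form. Thus I expect the denominator $\Lambda_{2,B}$ in the statement to be $\Lambda_{1,B}$, probably a typographical slip. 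When $\Lambda_{1,B}=0$ the bound should be read as its limit $\Lambda_{2,DB}|t-s|$.

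The main obstacle is making rigorous that the trajectory's state-derivative factor is bounded by $e^{\Lambda_{1,B}|t-s|}$ with the same base point $s$. This holds because the derivative of $x_u(r)$ with respect to the state at time $s$ is precisely $R_u(r,s)$ by the standard differentiability of solutions with respect to initial data. Justifying differentiation under the ODE needs the $C^2$ hypotheses and the $L^\infty$ control, via Carath\'eodory theory. The case $t<s$ is again handled symmetrically.
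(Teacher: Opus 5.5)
Your proof is correct and follows essentially the paper's route. The paper only says the estimates come from the standard Gr\"onwall lemma; your Duhamel formula for the differentiated variational equation (with $D_x$ taken with respect to the state at time $s$, which is the reading that produces the stated exponents) is the linear-comparison form of that argument. You are also right that the undefined $\Lambda_{2,B}$ in the denominator must be $\Lambda_{1,B}$: this follows from $\int_s^t e^{\Lambda_{1,B}(t-r)}\Lambda_{2,DB}\,e^{2\Lambda_{1,B}(r-s)}\,dr=\frac{\Lambda_{2,DB}}{\Lambda_{1,B}}\bigl(e^{2\Lambda_{1,B}(t-s)}-e^{\Lambda_{1,B}(t-s)}\bigr)$.
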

	
	For a fixed $x^0\in\R^d$, we let the endpoint map 
	\begin{equation}\label{eq:endpoint map}
		\cE:=\cE_{x^0,T}:u\in\cY\subset\cX\longmapsto\R^d,\quad\cE(u) = x_u(T).
	\end{equation}
	Next, for a fixed $u\in\cY$, we define $L_{u,\tau_i}\in\mathscr{L}(\cX,\R^d)$ by
	\begin{equation}\label{eq:map L_u tau_i}
		\hspace{-0.1cm}L_{u,\tau_i}v = \int_{t_0}^T\!\!\!D\Phi_{t,\tau_i}(x_{u}(t))\,B_t(x_u(t))\,v(t)\,dt,\quad v\in\cX,
	\end{equation}
	and for any fixed $x^1\in\R^d$ and $y_i:=\Phi_{T,\tau_i}(x^1)-\Phi_{t_0,\tau_i}(x^0)$, the feasible map
	\begin{equation}\label{eq:feasible map}
		F_{\tau_i}:u\in\cY\subset\cX\mapsto\R^d,\quad F_{\tau_i}(u) = L_{u,\tau_i}u-y_i.
	\end{equation}
	
	Their differentials $D\cE(u),\,DF_{\tau_i}(u):\cX\to\R^d$ satisfy (see~\cite[Lemmas 3.12]{tamekue2025controlanalysis} and~\cite[Theorem~3.2.6]{bressan2007introduction} )
	\begin{equation}\label{eq:identities}
		\begin{gathered}
			DF_{\tau_i}(u)= D\Phi_{T,\tau_i}\!\big(x_u(T)\big)\,D\cE(u),\\
			D\cE(u)v=\int_{t_0}^T\!\!\!R_u(T,t)B_t(x_u(t))v(t)\,dt,\quad v\in\cX.
		\end{gathered}
	\end{equation}
	Let $L_{u,\tau_i}^\ast,\,DF_{\tau_i}(u)^\ast\in\mathscr{L}(\R^d,\cX)$ denote the adjoint of $L_{u,\tau_i}$ and $DF_{\tau_i}(u)$, respectively. It holds for all $z\in\R^d$,
	\begin{equation}\label{eq:adjoint}
		\begin{split}
			[L_{u,\tau_i}^\ast z](t) &=  B_t(x_u(t))^\top D\Phi_{t,\tau_i}(x_{u}(t))^\top z,\\
			[DF_{\tau_i}(u)^\ast z](t) &=  B_t(x_u(t))^\top Q_{u,\tau_i}(T,t)^\top z
		\end{split}
	\end{equation}
	with $Q_{u,\tau_i}(T,t):=D\Phi_{T,\tau_i}(x_u(T))R_u(T,t)$.
	
	
		
		
		
	\begin{definition}\label{def:Gramians}
		We define the following matrices for $u\in\cY$:
		
		(1)\textit{``Endpoint Gramian''}: $\cM_{\tau_i}(u) = DF_{\tau_i}(u)DF_{\tau_i}(u)^\ast$.
		
		(2) \textit{``Representation Gramian''}: $\cN_{\tau_i}(u) = L_{u,\tau_i}L_{u,\tau_i}^\ast$.
		
		(3) \textit{``Mixed Gramian''}: $\cG_{\tau_i}(u) = L_{u,\tau_i}DF_{\tau_i}(u)^\ast$.
	\end{definition}
	Note that
	$\cM_{\tau_i}(u)$ and $\cN_{\tau_i}(u)$ are symmetric positive semi-definite (PSD) matrices, $\cG_{\tau_i}(u)$ is not in general. The roles are distinct: $\cM_{\tau_i}$ measures local surjectivity of the feasible map, $\cN_{\tau_i}$ generates the steering synthesis of~\cite{tamekue2025controlanalysis}, while $\cG_{\tau_i}$ is imposed by the Lagrange multiplier equations for the minimum-energy problem. Its non-symmetry is therefore structural, not a numerical design choice. If $N_t\equiv A_t$ and $B_t(x)=B_tx$, $A_t\in\R^{d\times d}$, $B_t\in\R^{d\times k}$, then
	\begin{equation*}
		\hspace{-0.3cm}\cM_{\tau_i}(u)=\cN_{\tau_i}(u)=\cG_{\tau_i}(u)=\int_{t_0}^T\!\!\!\!R(\tau_i,t)B_tB_t^\top R(\tau_i,t)^\top\!dt
	\end{equation*}
	where $R(t,s)$ is the STM of $\dot x=A_tx$ with $R(s,s)=\idty$.
	
If $\cG_{\tau_i}(u)$ is invertible at $u\in\cY$, then both $L_{u,\tau_i}$ and $DF_{\tau_i}(u)$ are onto, and consequently both $\cN_{\tau_i}(u)$ and $\cM_{\tau_i}(u)$ are invertible. The converse need not hold, because even if $L_{u,\tau_i}$ and $DF_{\tau_i}(u)$ are both onto, the composition $L_{u,\tau_i}DF_{\tau_i}(u)^\ast$ may be singular.
	
	When $\cN_{\tau_i}(u)$, respectively $\cM_{\tau_i}(u)$, is invertible at $u\in\cY$, we denote by $R_{u,\tau_i}$, respectively $S_{u,\tau_i}$, the canonical right-inverse, as given by
	\begin{equation*}\label{eq:canonical right inverses}
		\hspace{-0.25cm} R_{u,\tau_i}:=L_{u,\tau_i}^\ast\cN_{\tau_i}(u)^{-1},\;\, S_{u,\tau_i}:=DF_{\tau_i}(u)^\ast\cM_{\tau_i}(u)^{-1}.
	\end{equation*}
		
		\begin{proposition}\label{pro:preparatory}
			Let $u\in\cY$.
			If $\cN_{\tau_i}(u)$ is invertible, set
			$A_{u,\tau_i}:=K_{u,\tau_i}R_{u,\tau_i}$, $K_{u,\tau_i}:=DF_{\tau_i}(u)-L_{u,\tau_i}$. Then
			\begin{equation}\label{eq:link Gramians N}
				\cG_{\tau_i}(u)=\cN_{\tau_i}(u)\left(\idty+A_{u,\tau_i}\right)^\top.
			\end{equation}
			In particular, if $ \idty+A_{u,\tau_i}$ is invertible,
			then $\cG_{\tau_i}(u)$ is invertible.
			
			If $\cM_{\tau_i}(u)$ is invertible, set
			$S_{u,\tau_i}:=DF_{\tau_i}(u)^\ast\cM_{\tau_i}(u)^{-1}$ and
			$B_{u,\tau_i}:=K_{u,\tau_i}S_{u,\tau_i}$, $K_{u,\tau_i}:=DF_{\tau_i}(u)-L_{u,\tau_i}$. Then
			\begin{equation}\label{eq:link Gramians M}
				\cG_{\tau_i}(u)=\left(\idty-B_{u,\tau_i}\right)\cM_{\tau_i}(u).
			\end{equation}
			In particular, if $\idty-B_{u,\tau_i}$ is invertible,
			then $\cG_{\tau_i}(u)$ is invertible.
		\end{proposition}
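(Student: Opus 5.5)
The plan is pure linear algebra on the operators $L_{u,\tau_i}$, $DF_{\tau_i}(u)\in\mathscr{L}(\cX,\R^d)$ and their Hilbert adjoints from \eqref{eq:adjoint}. The key step is to write $DF_{\tau_i}(u)=L_{u,\tau_i}+K_{u,\tau_i}$ in the first case, and $L_{u,\tau_i}=DF_{\tau_i}(u)-K_{u,\tau_i}$ in the second. We then expand $\cG_{\tau_i}(u)=L_{u,\tau_i}DF_{\tau_i}(u)^\ast$, taking adjoints on the side that keeps the factorization clean. Throughout we use that $(XY)^\ast=Y^\ast X^\ast$ for bounded operators between Hilbert spaces, that adjoints of maps $\R^d\to\R^d$ are transposes, and that $\cN_{\tau_i}(u)$ and $\cM_{\tau_i}(u)$ are symmetric, so their inverses are symmetric too.

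For \eqref{eq:link Gramians N}, we have
\[
\cG_{\tau_i}(u)=L_{u,\tau_i}\big(L_{u,\tau_i}+K_{u,\tau_i}\big)^\ast=\cN_{\tau_i}(u)+L_{u,\tau_i}K_{u,\tau_i}^\ast .
\]
We then insert $\idty=\cN_{\tau_i}(u)\cN_{\tau_i}(u)^{-1}$ on the left of the second term:
\[
L_{u,\tau_i}K_{u,\tau_i}^\ast=\cN_{\tau_i}(u)\,\cN_{\tau_i}(u)^{-1}L_{u,\tau_i}K_{u,\tau_i}^\ast=\cN_{\tau_i}(u)\,\big(K_{u,\tau_i}L_{u,\tau_i}^\ast\cN_{\tau_i}(u)^{-1}\big)^\top .
\]
Here we used that $(\cN_{\tau_i}(u)^{-1})^\top=\cN_{\tau_i}(u)^{-1}$. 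Since $K_{u,\tau_i}L_{u,\tau_i}^\ast\cN_{\tau_i}(u)^{-1}=K_{u,\tau_i}R_{u,\tau_i}=A_{u,\tau_i}$, this gives $\cG_{\tau_i}(u)=\cN_{\tau_i}(u)(\idty+A_{u,\tau_i})^\top$.

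For \eqref{eq:link Gramians M}, we have
\[
\cG_{\tau_i}(u)=\big(DF_{\tau_i}(u)-K_{u,\tau_i}\big)DF_{\tau_i}(u)^\ast=\cM_{\tau_i}(u)-K_{u,\tau_i}DF_{\tau_i}(u)^\ast .
\]
We insert $\cM_{\tau_i}(u)^{-1}\cM_{\tau_i}(u)$ on the right of the second term. This gives $K_{u,\tau_i}S_{u,\tau_i}\cM_{\tau_i}(u)=B_{u,\tau_i}\cM_{\tau_i}(u)$, and hence $\cG_{\tau_i}(u)=(\idty-B_{u,\tau_i})\cM_{\tau_i}(u)$. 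This case needs no transpose at all.

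The invertibility claims follow because each identity writes $\cG_{\tau_i}(u)$ as a product of two $d\times d$ matrices, both invertible under the stated hypotheses. Note that the transpose of an invertible matrix is invertible. Moreover, $\det\cG_{\tau_i}(u)$ is the product of the two determinants, so it is nonzero.

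The only point needing care, and the main one I expect to watch, is the adjoint bookkeeping in the first identity. One must check that $(K_{u,\tau_i}R_{u,\tau_i})^\top$ equals $\cN_{\tau_i}(u)^{-1}L_{u,\tau_i}K_{u,\tau_i}^\ast$. This holds because $K_{u,\tau_i}$, $L_{u,\tau_i}$ and $R_{u,\tau_i}$ are bounded, so the adjoint of the composition $\R^d\to\cX\to\R^d$ is the Euclidean transpose. Everything else is routine.
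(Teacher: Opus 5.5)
Your proof is correct and takes essentially the same approach as the paper, which omits the argument as immediate (citing Lemma 3.12 of the earlier work). Your direct expansion $\cG_{\tau_i}(u)=\cN_{\tau_i}(u)+L_{u,\tau_i}K_{u,\tau_i}^\ast$, resp. $\cG_{\tau_i}(u)=\cM_{\tau_i}(u)-K_{u,\tau_i}DF_{\tau_i}(u)^\ast$, combined with the symmetry of $\cN_{\tau_i}(u)^{-1}$ for the transpose step, is exactly that immediate linear-algebra verification.
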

	
	\noindent The proof of Proposition~\ref{pro:preparatory} is immediate, and it is omitted for brevity. See, for instance, \cite[Lemma 3.12]{tamekue2025controlanalysis}.
	
	\section{Main Results: minimum-energy control}\label{s:FCC and MES}
	
	Our goal in this section is to synthesize a control that minimizes energy within the feasible set. 
		
		Recall that for $x^0\in\R^d$, we say that $x^1\in\R^d$ is \emph{reachable on} $[t_0,T]$ from $x^0$ if there is $u\in\cY$ s.t. the  solution $x_u(\cdot)$ of \eqref{eq:state-dependent-input} satisfies $x_u(T)=x^1$. The reachable set is given by
		\begin{equation}\label{eq:reachable set nonautonomous}
			\cR_{\Sigma}(T, x^0) := \Big\{\, x_u(T)\;:\; x_u(\cdot)\ \text{solves \eqref{eq:state-dependent-input}}\Big\}.
		\end{equation}
		If $\cR_{\Sigma}(T, x^0)=\R^d$ for every $x^0\in\R^d$, one says that \eqref{eq:state-dependent-input} is \textit{completely controllable (here after controllable)} on $[t_0, T]$.
	
	\begin{definition}[\cite{tamekue2025controlanalysis}]
		Let $C_i>0$ depends only on system data (e.g., $\|B\|_\infty$, $L_B$, $\Lambda_1$, $\Lambda_2$, $\delta t_0$, and possibly $|x^0|$). For $\Pi\in\{\cN_{\tau_i},\cM_{\tau_i}\}$, a \textit{feasible coercive class} is a set
		\begin{equation}\label{eq:gd feasible coercive class}
			\cF(C_i):=\{u\in\cY:\lambda_{\min}(\Pi(u))\ge C_i^{-1}\}.
		\end{equation}
	\end{definition}
	
	The set $\cF(C_i)$ may be empty unless there exists $u^\flat\in\cY$ with
	$\lambda_{\min}(\Pi(u^\flat))\ge C_i^{-1}$. It is closed in $\cY$ since
	$u\mapsto\Pi(u)$ and $u\mapsto\lambda_{\min}(\Pi(u))$ are continuous. 
	Thus, nonemptiness of $\cF(C_i)$ is a scope assumption, not an automatic consequence of controllability. 
		In applications, $C_i$ is obtained from an analytic lower bound, as in~\eqref{eq:coercivity bound full actuated case} and  Proposition~\ref{pro:ubg_2d_Mu} below.

Motivated by Proposition~\ref{pro:preparatory}, we also introduce the following subset, where $\pi(u)\in\{\idty+A_{u,\tau_i},\idty-B_{u,\tau_i}\}$,
\begin{equation}\label{eq:finite-dim invertibility}
	\cI(c_i):=\{u\in\cF(C_i):\sigma_{\min}(\pi(u))\ge c_i^{-1}\}
\end{equation}
for some $c_i>0$ which depends only on the system's data. Here, $\sigma_{\min}(\pi(u))$ is the minimum singular value of $\pi(u)$. It is therefore clear that for some $\gamma_i>0$, it holds that
\begin{equation}
	\sigma_{\min}(\cG_{\tau_i}(u))\ge\gamma_i^{-1} ,\qquad\forall u\in\cI(c_i).
\end{equation}

For the unicycle example in Section~\ref{sss:unicycle}, $\gamma_i$ is determined directly as a function of $C_i$. This suggests that one can often work directly in a unified uniformly invertible class. 

Recall the Gramian-like synthesis map in~\cite{tamekue2025controlanalysis}, 
\begin{equation}\label{eq:gramian-like synthesis map}
	\cS_i(u) = L_{u,\tau_i}^\ast\,\cN_{\tau_i}(u)^{-1}y_i,\quad \forall u\in\cF(C_i)
\end{equation}
which, with $\zeta_{y_i}:= C_i\,\|B\|_\infty\,e^{\Lambda_1\delta t_0}\,|y_i|$, satisfies
\begin{equation}\label{eq:zeta radius}
	\|\cS_i(u)\|_\infty\le\zeta_{y_i}\quad\forall u\in\cF(C_i).
\end{equation}
Accordingly, define
\begin{equation}\label{eq:feasible coercive ball}
	\cF_{\zeta_{y_i}}:=\cI(c_i)\cap\{u\in\cY:\|u\|_\infty\le \zeta_{y_i}\}.
\end{equation}

	\begin{lemma}\label{lem:minimizer-is-fp}
		Assume that $\cI(c_i)\neq\emptyset$. If $\bar u$ is a local minimizer of $\tfrac12\|u\|_{L^2}^2$ over
		$\mathfrak F_{y_i}:=\{u\in\cF_{\eta_{y_i}}:\ F_{\tau_i}(u)=0\}$, then $\bar u$ satisfies the fixed-point relation
		\begin{equation}\label{eq:a priori fixed point from LM}
			\bar u= DF_{\tau_i}(\bar u)^\ast\,\cG_{\tau_i}(\bar u)^{-1}y_i.
		\end{equation}
	\end{lemma}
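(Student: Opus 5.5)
The plan is to apply a Lagrange multiplier rule at $\bar u$ for the equality constraint $F_{\tau_i}(u)=0$ and then remove the multiplier using the invertibility of the mixed Gramian. We first work as if the membership constraint $u\in\cF_{\eta_{y_i}}$ were inactive near $\bar u$. This means treating $\bar u$ as a local minimizer of $J(u)=\tfrac12\|u\|_{L^2}^2$ subject only to $F_{\tau_i}(u)=0$ in a $\cY$-neighbourhood. This is the delicate point, discussed at the end.

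\textbf{Step 1 (regularity of the constraint).} By the identities~\eqref{eq:identities}, $F_{\tau_i}$ is $C^1$ from $\cY$ (with the $L^2$-compatible differential) into $\R^d$. By Proposition~\ref{pro:preparatory} and the definition of $\cI(c_i)$, $\cG_{\tau_i}(\bar u)=L_{\bar u,\tau_i}DF_{\tau_i}(\bar u)^\ast$ is invertible. Hence $DF_{\tau_i}(\bar u)$ is onto $\R^d$, and $\bar u$ is a regular point of the constraint. The Lagrange multiplier theorem (Lyusternik) then gives $\lambda\in\R^d$ with
\[
\bar u = DF_{\tau_i}(\bar u)^\ast\lambda ,
\]
since $DJ(\bar u)v=\langle \bar u,v\rangle_{\cX}$. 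The adjoint formula~\eqref{eq:adjoint} shows that the right-hand side is bounded in time. So the identity holds in $\cX$ and in $\cY$, and no issue arises from working in $\cY\subset\cX$.

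\textbf{Step 2 (identifying the multiplier).} Apply $L_{\bar u,\tau_i}$ to the multiplier identity. The constraint $F_{\tau_i}(\bar u)=0$ means $L_{\bar u,\tau_i}\bar u=y_i$, so
\[
y_i=L_{\bar u,\tau_i}DF_{\tau_i}(\bar u)^\ast\lambda=\cG_{\tau_i}(\bar u)\lambda .
\]
Invertibility of $\cG_{\tau_i}(\bar u)$ on $\cI(c_i)$, with $\sigma_{\min}\ge\gamma_i^{-1}$, gives $\lambda=\cG_{\tau_i}(\bar u)^{-1}y_i$. Substituting back yields~\eqref{eq:a priori fixed point from LM}. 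This also explains why the non-symmetric $\cG$, and not $\cM$ or $\cN$, appears: the constraint is written through $L$, while the first variation goes through $DF$.

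\textbf{Main obstacle.} The hard step is justifying the multiplier rule when the admissible set is $\cF_{\eta_{y_i}}$, which involves the inequalities $\lambda_{\min}(\Pi(u))\ge C_i^{-1}$, $\sigma_{\min}(\pi(u))\ge c_i^{-1}$ and $\|u\|_\infty\le\eta_{y_i}$, rather than an open set. My first attempt is to read ``local minimizer over $\mathfrak F_{y_i}$'' as local within the constraint manifold $\{F_{\tau_i}=0\}$ relative to a neighbourhood where these inequalities hold. I would then argue as follows. For any $v\in\ker DF_{\tau_i}(\bar u)$, Lyusternik's theorem gives a $C^1$ curve $u(s)=\bar u+sv+o(s)$ in $\{F_{\tau_i}=0\}$. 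For small $|s|$, this curve remains in $\mathfrak F_{y_i}$ in either direction of $s$, provided the inequalities are not saturated in a way that blocks two-sided variations. Two-sided variations give $\langle\bar u,v\rangle=0$ for all $v\in\ker DF_{\tau_i}(\bar u)$. Hence $\bar u\in(\ker DF_{\tau_i}(\bar u))^\perp=\operatorname{Ran}DF_{\tau_i}(\bar u)^\ast$, the range being closed because it is finite-dimensional.

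If some constraints are active, I would instead invoke the Karush--Kuhn--Tucker form with the inequality constraints treated as inactive by assumption. The alternative is to note that the lemma is an a priori necessary condition in which these classes serve only to guarantee the invertibility used in Step 2. This is presumably the author's intended reading.
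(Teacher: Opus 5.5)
Your Steps 1 and 2 are exactly the paper's proof. Invertibility of $\cG_{\tau_i}(\bar u)=L_{\bar u,\tau_i}DF_{\tau_i}(\bar u)^\ast$ gives surjectivity of $DF_{\tau_i}(\bar u)$; the Lagrange multiplier theorem (the paper cites Zeidler, Thm.~43.D) gives $\bar u=DF_{\tau_i}(\bar u)^\ast\lambda$; and applying $L_{\bar u,\tau_i}$ with $L_{\bar u,\tau_i}\bar u=y_i$ identifies $\lambda=\cG_{\tau_i}(\bar u)^{-1}y_i$.

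The paper applies the multiplier theorem directly and never discusses the inequality constraints defining $\cF_{\eta_{y_i}}$. It therefore implicitly adopts your reading that these constraints are inactive at $\bar u$, so your caveat is a fair remark about the statement, not a gap in your argument relative to the paper's.
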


\noindent The proof of Lemma~\ref{lem:minimizer-is-fp} is given in Section~\ref{s:minimum-energy control}. This motivates the Lagrange multiplier synthesis map,
\begin{equation}\label{eq:synthesis map Z_i}
	\cZ_i(u)=DF_{\tau_i}(u)^\ast\,\cG_{\tau_i}(u)^{-1}y_i,\qquad \forall u\in\cI(c_i).
\end{equation}
If $\bar u_i$ is a fixed point of $\cZ_i$, as given by~\eqref{eq:a priori fixed point from LM}, then
\begin{equation}\label{eq:a priori estimate fp of Z_i}
	\|\cZ_i(\bar u_i)\|_\infty=\|DF_{\tau_i}(\bar u_i)^\ast\,\cG_{\tau_i}(\bar u_i)^{-1}y_i\|_\infty\le\zeta_{y_i}.
\end{equation}
\begin{lemma}\label{lem:a priori estimate on Z_i(u)}
	There exists a positive constant $\Xi_i:=\Xi_i(C_i,\zeta_{y_i},\Lambda_1,\Lambda_2,\|B\|_\infty,\delta t_0,L_B)>0$ such that
	\begin{equation}\label{eq:a priori estimate on Z_i(u)}
		\|\cZ_i(u)\|_\infty\le\vartheta_{y_i}:=\Xi_i|y_i|,\qquad\forall u\in\cF_{\zeta_{y_i}}.
	\end{equation}
\end{lemma}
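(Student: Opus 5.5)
The bound will follow by factoring $\cZ_i(u)=DF_{\tau_i}(u)^\ast\,\cG_{\tau_i}(u)^{-1}y_i$ and estimating the two factors separately. For $u\in\cF_{\zeta_{y_i}}$ and a.e. $t\in[t_0,T]$, \eqref{eq:adjoint} gives
\[
|\cZ_i(u)(t)|\le \|B\|_\infty\,\|D\Phi_{T,\tau_i}(x_u(T))\|\,\|R_u(T,t)\|\,\|\cG_{\tau_i}(u)^{-1}\|\,|y_i| .
\]
It then suffices to bound each factor uniformly on $\cF_{\zeta_{y_i}}$ by a quantity depending only on the data listed in $\Xi_i$.

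\textbf{The flow factor.} The standard Gr\"onwall estimate for the variational equation of $N_t$ (cf.~\cite[Section 2]{tamekue2025controlanalysis}) gives $\|D\Phi_{T,\tau_i}(x)\|\le e^{\Lambda_1|T-\tau_i|}\le e^{\Lambda_1\delta t_0}$. For $\tau_i=T$ this factor is simply $1$.

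\textbf{The state-transition factor.} Lemma~\ref{lem:estimate of the STM R_u} gives $\|R_u(T,t)\|\le e^{\Lambda_{1,B}\delta t_0}$ with $\Lambda_{1,B}=\Lambda_1+L_B\|u\|_\infty$. On $\cF_{\zeta_{y_i}}$ we have $\|u\|_\infty\le\zeta_{y_i}$, so this is at most $e^{(\Lambda_1+L_B\zeta_{y_i})\delta t_0}$. This is where membership in the ball of radius $\zeta_{y_i}$ is used, and it explains the presence of $\zeta_{y_i}$ and $L_B$ in $\Xi_i$.

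\textbf{The inverse-Gramian factor.} This is the step I expect to require care. Since $\cF_{\zeta_{y_i}}\subset\cI(c_i)$, we have $\sigma_{\min}(\cG_{\tau_i}(u))\ge\gamma_i^{-1}$, hence $\|\cG_{\tau_i}(u)^{-1}\|\le\gamma_i$. To express $\gamma_i$ through the advertised constants, I would use Proposition~\ref{pro:preparatory}. In the $\cN$ case, \eqref{eq:link Gramians N} gives
\[
\cG_{\tau_i}(u)^{-1}=(\idty+A_{u,\tau_i})^{-\top}\cN_{\tau_i}(u)^{-1},
\]
so $\|\cG_{\tau_i}(u)^{-1}\|\le c_i C_i$, using $\lambda_{\min}(\cN_{\tau_i}(u))\ge C_i^{-1}$ and $\sigma_{\min}(\idty+A_{u,\tau_i})\ge c_i^{-1}$. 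The $\cM$ case is identical via \eqref{eq:link Gramians M}. Since $c_i$ depends only on system data, it is absorbed into $\Xi_i$; the paper lists only $C_i$, presumably treating $c_i$ as data-dependent.

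\textbf{Conclusion.} Multiplying the bounds and taking the supremum over $t$ yields
\[
\|\cZ_i(u)\|_\infty\le \Xi_i|y_i|,\qquad \Xi_i:=c_iC_i\|B\|_\infty e^{(2\Lambda_1+L_B\zeta_{y_i})\delta t_0}.
\]
This is independent of $u\in\cF_{\zeta_{y_i}}$. The constant $\Lambda_2$ does not actually enter, so including it in the list of dependencies is harmless.
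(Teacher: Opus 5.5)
Your proof is correct and follows the route the paper only sketches (``Gr\"onwall plus Lemma~\ref{lem:estimate of the STM R_u}''): the pointwise bound from \eqref{eq:adjoint}, Gr\"onwall for $D\Phi_{T,\tau_i}$, the STM estimate with $\|u\|_\infty\le\zeta_{y_i}$, and $\|\cG_{\tau_i}(u)^{-1}\|\le c_iC_i$ on $\cI(c_i)$ via Proposition~\ref{pro:preparatory}. You are right that $\Xi_i$ must also depend on $c_i$ (or $\gamma_i$), and this is unavoidable, since coercivity of $\cN_{\tau_i}$ or $\cM_{\tau_i}$ alone does not bound the inverse of the non-symmetric $\cG_{\tau_i}$.
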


\noindent The proof of Lemma~\ref{lem:a priori estimate on Z_i(u)} uses the standard Gr\"onwall lemma and Lemma~\ref{lem:estimate of the STM R_u}. 

It follows from~\eqref{eq:a priori estimate on Z_i(u)} that $\cZ_i$ maps $\cI(c_i)\cap\{\|u\|_\infty\le\zeta_{y_i}\}$ into $\{\|u\|_\infty\le\vartheta_{y_i}\}$. Consistent with the unconstrained controls assumption, we introduce a unified radius
\begin{equation}\label{eq:unified radius}
	\eta_{y_i}\ge\max\{\zeta_{y_i},\vartheta_{y_i}\},
\end{equation}
and the unified ball
\begin{equation}\label{eq:unified feasible coercive ball}
	\cF_{\eta_{y_i}}:=\cI(c_i)\cap\{u\in\cY:\|u\|_\infty\le \eta_{y_i}\}
\end{equation}
such that $\cZ_i$ maps $\cF_{\eta_{y_i}}$ into $\{\|u\|_\infty\le\eta_{y_i}\}$. So, the following invariance condition ensures that $\cZ_i$ is a self-map on $\cF_{\eta_{y_i}}$,
\begin{equation}\label{eq:self-map condition Zi-coercivity}
	\cZ_i\!\left(\cI(c_i)\right)\subset \cI(c_i).
\end{equation}

	\begin{lemma}\label{lem:abstract-fixed-point}
		Assume that $\cI(c_i)\neq\emptyset$ and that~\eqref{eq:self-map condition Zi-coercivity} holds. Then there exists a positive sequence $(q_m)_{m\ge1}$ with $q_m:=K_i^m\vartheta_m(p_i)$ such that for every $m\in\N$, it holds that
		\begin{equation}\label{eq:Zi-iterate-contractivity}
			\|\cZ_i^{\,m}(u)-\cZ_i^{\,m}(v)\|_\infty
			\le q_{m}\|u-v\|_\infty,
			\quad \forall u,v\in\cF_{\eta_{y_i}},
		\end{equation}
		where $K_i>0$ depends only on system data, and for some $p_i\in(0,1)$ and $\vartheta_m(p_i)\to 0$ as $m\to\infty$. Furthermore, if there exists $m_i\ge1$ such that $q_{m_i}<1$, then $\cZ_i$ admits a unique fixed point $u_i\in\cF_{\eta_{y_i}}$, and the Picard iteration
		$u^{(m+1)}=\cZ_i(u^{(m)})$ converges to $u_i$ for any $u^{(0)}\in\cF_{\eta_{y_i}}$.
	\end{lemma}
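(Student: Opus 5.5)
The plan has three stages: first show that $\cZ_i$ is a self-map of $\cF_{\eta_{y_i}}$ that is Lipschitz in $\|\cdot\|_\infty$ with a constant depending only on system data; then sharpen this to an iterate bound of Volterra type, from which \eqref{eq:Zi-iterate-contractivity} follows; and finally apply a Banach-type argument to a power of $\cZ_i$.

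\textbf{Self-map.} Lemma~\ref{lem:a priori estimate on Z_i(u)} together with \eqref{eq:unified radius} gives $\|\cZ_i(u)\|_\infty\le\eta_{y_i}$ on $\cF_{\eta_{y_i}}$. The invariance \eqref{eq:self-map condition Zi-coercivity} then yields $\cZ_i(\cF_{\eta_{y_i}})\subset\cF_{\eta_{y_i}}$, so all iterates are well defined.

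\textbf{One-step Lipschitz bound.} For $u,v\in\cF_{\eta_{y_i}}$ I would split
\begin{equation*}
\cZ_i(u)-\cZ_i(v)=\big(DF_{\tau_i}(u)^\ast-DF_{\tau_i}(v)^\ast\big)\cG_{\tau_i}(u)^{-1}y_i+DF_{\tau_i}(v)^\ast\big(\cG_{\tau_i}(u)^{-1}-\cG_{\tau_i}(v)^{-1}\big)y_i .
\end{equation*}
In the second term I would use $A^{-1}-B^{-1}=A^{-1}(B-A)B^{-1}$ and the uniform bound $\sigma_{\min}(\cG_{\tau_i})\ge\gamma_i^{-1}$ valid on $\cI(c_i)$. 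Both terms then reduce to Lipschitz estimates, in $u$, of the integrands $B_t(x_u(t))^\top Q_{u,\tau_i}(T,t)^\top$ and $D\Phi_{t,\tau_i}(x_u(t))B_t(x_u(t))$. These follow from:
\begin{itemize}
\item Gr\"onwall for trajectories, $|x_u(t)-x_v(t)|\le C\int_{t_0}^t|u-v|\,ds$, using $\|u\|_\infty\le\eta_{y_i}$ and the bounds of Assumptions~\ref{ass:on N_t}--\ref{ass:on B_t};
\item the $D_xR_u$ bound of Lemma~\ref{lem:estimate of the STM R_u}, combined with the Lipschitz dependence of $R_u$ on $u$ obtained from its variational equation, which contains $u$ explicitly through $D_xB_t\,u$;
\item the second-derivative estimates on $D\Phi$ from \cite{tamekue2025controlanalysis}.
\end{itemize}
This gives $\|\cZ_i(u)-\cZ_i(v)\|_\infty\le K_i\|u-v\|_\infty$.

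\textbf{Iterate contractivity.} The one-step bound alone does not decay, so I would refine it to a Volterra-type inequality
\begin{equation*}
|\cZ_i(u)(t)-\cZ_i(v)(t)|\le K_i\Big(\int_{t_0}^{T}\!\!w(t,s)|u(s)-v(s)|\,ds\Big),
\end{equation*}
with a kernel $w$ carrying the causal structure of $x_u$. The difficulty is that the terms $R_u(T,t)$ and $\cG_{\tau_i}^{-1}$ are anticausal or global, so the naive iteration (which would give $(K_i\delta t_0)^m/m!$) fails. Instead I would introduce a weighted norm $\|w\|_{p}:=\sup_t e^{-\lambda(t-t_0)}|w(t)|$, with $\lambda$ chosen so that the causal parts contract by a factor $p_i\in(0,1)$, and then return to $\|\cdot\|_\infty$ at a cost of $e^{\lambda\delta t_0}$. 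This gives $q_m=K_i^m\vartheta_m(p_i)$ with $\vartheta_m(p_i)$ of order $p_i^m$ or $p_i^m/m!$. I expect this step to be the main obstacle, since the global finite-rank terms resist decay. If they cannot be absorbed, I would fall back on a factorial/Neumann-type estimate on the causal part and bound the finite-rank part through $|y_i|$, taken small relative to the data.

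\textbf{Fixed point and Picard convergence.} Suppose $q_{m_i}<1$. The set $\cF_{\eta_{y_i}}$ is closed in $\cY$: it is the intersection of the closed set $\cF(C_i)$, a closed ball, and the closed condition on $\sigma_{\min}(\pi(u))$, by continuity. It is therefore complete, and $\cZ_i^{m_i}$ is a contraction on it, so it has a unique fixed point $u_i$. Since $\cZ_i^{m_i}(\cZ_i(u_i))=\cZ_i(u_i)$, uniqueness forces $\cZ_i(u_i)=u_i$, and any fixed point of $\cZ_i$ is also one of $\cZ_i^{m_i}$, which gives uniqueness for $\cZ_i$. For convergence, write $m=nm_i+r$ with $0\le r<m_i$. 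Then $\|u^{(m)}-u_i\|_\infty\le q_{m_i}^{\,n}\max_{r<m_i}q_r\|u^{(0)}-u_i\|_\infty$, which tends to $0$, so the Picard iteration converges to $u_i$ for every $u^{(0)}\in\cF_{\eta_{y_i}}$.
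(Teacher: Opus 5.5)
Stage three is where your proposal breaks down. You never establish the Volterra-type inequality, and the weighted norm $\sup_t e^{-\lambda(t-t_0)}|\cdot|$ cannot handle the two non-causal contributions to $\cZ_i(u)-\cZ_i(v)$. The first comes through $Q_{u,\tau_i}(T,t)$, which depends on $u$ on $[t,T]$. The second is $DF_{\tau_i}(v)^\ast\big(\cG_{\tau_i}(u)^{-1}-\cG_{\tau_i}(v)^{-1}\big)y_i$, which depends on $u-v$ on all of $[t_0,T]$. With your weight ($\lambda>0$), both carry a factor of order $(e^{\lambda\delta t_0}-1)/\lambda\ge\delta t_0$, not a tunable $p_i<1$. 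Reversing the weight helps the anticausal term only at the expense of the causal ones, and it does nothing for the global term. Your fallback of taking $|y_i|$ small is not among the lemma's hypotheses. So the claimed $\vartheta_m(p_i)\sim p_i^m/m!$ is unsupported. Note also that if $q_m$ decayed automatically, the hypothesis that $q_{m_i}<1$ for some $m_i$ would be redundant. That hypothesis is listed again as a separate assumption in the Main Theorem.

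However, the lemma does not need any decay of $q_m$. It only requires $K_i$ to depend on the data and $\vartheta_m(p_i)\to 0$. Let $L$ be your stage-two Lipschitz constant on $\cF_{\eta_{y_i}}$. The self-map property gives $\|\cZ_i^{\,m}(u)-\cZ_i^{\,m}(v)\|_\infty\le L^m\|u-v\|_\infty$, so for any fixed $p_i\in(0,1)$ you may take $K_i:=L/p_i$ and $\vartheta_m(p_i):=p_i^m$. If you delete stage three and the smallness fallback, stages one, two and four give a complete proof (set $q_0:=1$ in the Picard estimate). The paper omits this proof and points to the mechanism of its earlier steering result. The structure of the statement, an iterate estimate followed by contraction of $\cZ_i^{m_i}$ once $q_{m_i}<1$, is exactly what your stages two and four deliver.

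One correction to stage one: Lemma~\ref{lem:a priori estimate on Z_i(u)} is stated on $\cF_{\zeta_{y_i}}$. Its constant $\Xi_i$ depends on $\zeta_{y_i}$, through bounds such as $\Lambda_{1,B}=\Lambda_1+L_B\|u\|_\infty$, so the lemma does not by itself bound $\cZ_i$ on the larger set $\cF_{\eta_{y_i}}$. The inclusion $\cZ_i(\cF_{\eta_{y_i}})\subset\{\|u\|_\infty\le\eta_{y_i}\}$ is part of the paper's choice of $\eta_{y_i}$, and you should invoke it as such.
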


The proof follows the same mechanism as in~\cite[Theorem~3.10]{tamekue2025controlanalysis} and it is omitted for brevity.

The main result of this article is the following.

	\begin{theorem}[Main Theorem]\label{thm:main controllability result nonlinear}
		Let $(x^0,x^1)\in\R^d\times\R^d$ and set $y_i:=\Phi_{T,\tau_i}(x^1)-\Phi_{t_0,\tau_i}(x^0)$.
		Assume that $\cI(c_i)\neq\emptyset$, that~\eqref{eq:self-map condition Zi-coercivity} holds, and that the iterate-contractivity condition of Lemma~\ref{lem:abstract-fixed-point} is satisfied. Let $\bar u_i\in\cF_{\eta_{y_i}}$ be the resulting unique fixed point of $\cZ_i$.
		Then $\bar u_i$ steers~\eqref{eq:state-dependent-input} from $x^0$ to $x^1$ on $[t_0,T]$ and satisfies
		\begin{equation}\label{eq:control with G_i}
			\bar u_i(t)=B_t(x_{\bar u_i}(t))^\top Q_{\bar u_i,\tau_i}(T,t)^\top\,\cG_{\tau_i}(\bar u_i)^{-1}y_i.
		\end{equation}
		Moreover, one has the energy identity
		\begin{equation}\label{eq:energy certificate G_i}
			\|\bar u_i\|_{L^2}^2= z_i^\top\cM_{\tau_i}(\bar u_i)z_i,
			\quad z_i:=\cG_{\tau_i}(\bar u_i)^{-1}y_i.
		\end{equation}
	Finally, $\bar u_i$ is the unique global minimizer of $\tfrac12\|u\|_{L^2}^2$ over the feasible set $\mathfrak F_{y_i}:=\{u\in\cF_{\eta_{y_i}}:\ F_{\tau_i}(u)=0\}$,
	that is,
	\[
	\|\bar u_i\|_{L^2}^2\le \|w\|_{L^2}^2\quad\forall\,w\in\mathfrak F_{y_i},
	\; \text{with equality iff }\;w=\bar u_i\;\text{a.e}.
	\]
	\end{theorem}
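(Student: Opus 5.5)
The plan has four steps: feasibility from the fixed-point equation, the explicit formula and energy identity by direct substitution, and global minimality by combining Lemma~\ref{lem:minimizer-is-fp} with uniqueness of the fixed point.

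\emph{Feasibility.} Lemma~\ref{lem:abstract-fixed-point} supplies a unique fixed point $\bar u_i\in\cF_{\eta_{y_i}}$. From $\bar u_i=\cZ_i(\bar u_i)=DF_{\tau_i}(\bar u_i)^\ast z_i$ with $z_i=\cG_{\tau_i}(\bar u_i)^{-1}y_i$, we apply $L_{\bar u_i,\tau_i}$ to both sides:
\[
L_{\bar u_i,\tau_i}\bar u_i=L_{\bar u_i,\tau_i}DF_{\tau_i}(\bar u_i)^\ast z_i=\cG_{\tau_i}(\bar u_i)z_i=y_i .
\]
Hence $F_{\tau_i}(\bar u_i)=0$. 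By the representation~\eqref{eq:fb representation} at $t=T$, this gives $\Phi_{T,\tau_i}(x_{\bar u_i}(T))=\Phi_{t_0,\tau_i}(x^0)+y_i=\Phi_{T,\tau_i}(x^1)$. Since $\Phi_{T,\tau_i}$ is a diffeomorphism, $x_{\bar u_i}(T)=x^1$, so $\bar u_i$ steers $x^0$ to $x^1$.

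\emph{Formula and energy identity.} Formula~\eqref{eq:control with G_i} follows by inserting the adjoint expression~\eqref{eq:adjoint} into $\bar u_i=DF_{\tau_i}(\bar u_i)^\ast z_i$. For the energy, we use the adjoint relation:
\[
\|\bar u_i\|_{L^2}^2=\langle DF_{\tau_i}(\bar u_i)^\ast z_i, DF_{\tau_i}(\bar u_i)^\ast z_i\rangle_{\cX}=z_i^\top DF_{\tau_i}(\bar u_i)DF_{\tau_i}(\bar u_i)^\ast z_i=z_i^\top\cM_{\tau_i}(\bar u_i)z_i .
\]

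\emph{Global minimality.} This is the main point, because the constraint $F_{\tau_i}(u)=0$ is nonlinear, so a convexity argument is not available. The plan is an existence-plus-uniqueness argument.
\begin{enumerate}
\item $\mathfrak F_{y_i}$ is nonempty, since it contains $\bar u_i$.
\item I would show a minimizer $u^\star$ of $\tfrac12\|u\|_{L^2}^2$ over $\mathfrak F_{y_i}$ exists. I would take a minimizing sequence, which is bounded in $L^\infty$ by $\eta_{y_i}$, and extract a weak-$\ast$ convergent subsequence in $\cY$.
\item I would pass to the limit in the constraint $F_{\tau_i}(u_n)=0$ and in the coercivity conditions defining $\cI(c_i)$.
\item By weak lower semicontinuity of the $L^2$ norm, $u^\star$ is then a minimizer.
\end{enumerate}

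The delicate part is step~3. For control-affine systems, trajectories depend continuously on controls under weak convergence with uniform $L^\infty$ bounds: Gr\"onwall together with Lemma~\ref{lem:estimate of the STM R_u} gives equicontinuity of the $x_{u_n}$, hence uniform convergence. Then $L_{u,\tau_i}u$, $DF_{\tau_i}(u)$, the Gramians, and the closed constraints $\lambda_{\min}\ge C_i^{-1}$, $\sigma_{\min}\ge c_i^{-1}$ all pass to the limit. These quantities are affine in $u$ with continuous dependence through $x_u$. If this compactness step proves too heavy, I would fall back on the interpretation of ``local'' in Lemma~\ref{lem:minimizer-is-fp} relative to $\mathfrak F_{y_i}$ and argue directly at a minimizer.

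Once $u^\star$ exists, it is in particular a local minimizer. Lemma~\ref{lem:minimizer-is-fp} then gives $u^\star=\cZ_i(u^\star)$ with $u^\star\in\cF_{\eta_{y_i}}$. Uniqueness of the fixed point in $\cF_{\eta_{y_i}}$ from Lemma~\ref{lem:abstract-fixed-point} forces $u^\star=\bar u_i$. Hence $\|\bar u_i\|_{L^2}^2\le\|w\|_{L^2}^2$ for all $w\in\mathfrak F_{y_i}$.

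For the equality case: any $w\in\mathfrak F_{y_i}$ with $\|w\|_{L^2}=\|\bar u_i\|_{L^2}$ is itself a global, hence local, minimizer. The same two lemmas then give $w=\bar u_i$ in $\cY$, i.e.\ a.e.
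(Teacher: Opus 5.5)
Your proposal is correct and has the same skeleton as the paper's proof. Feasibility comes from applying $L_{\bar u_i,\tau_i}$ to $\bar u_i=DF_{\tau_i}(\bar u_i)^\ast z_i$. The formula and energy identity come from the adjoint. Global minimality, including the equality case, comes from existence of a minimizer, Lemma~\ref{lem:minimizer-is-fp}, and uniqueness of the fixed point.

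The real difference is the existence step. The paper imports it from \cite[Proposition~3.14]{tamekue2025controlanalysis}, whereas you run the direct method yourself. This makes the argument self-contained. It also forces you to check that $\mathfrak F_{y_i}$ is weak-$\ast$ sequentially closed, including the extra constraint defining $\cI(c_i)$; the norm-closedness of $\cF(C_i)$ noted in the paper would not suffice. The $\cI(c_i)$ constraint passes to the limit because $\idty+A_{u,\tau_i}=\cG_{\tau_i}(u)^\top\cN_{\tau_i}(u)^{-1}$ and $\idty-B_{u,\tau_i}=\cG_{\tau_i}(u)\cM_{\tau_i}(u)^{-1}$ are continuous functions of the Gramians.

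One point needs more care. $DF_{\tau_i}(u)$, and hence $\cM_{\tau_i}(u)$ and $\cG_{\tau_i}(u)$, depend on $u$ not only through $x_u$ but also through $R_u$. The generator of $R_u$ contains the term $D_x[B_t(x_u(t))]\,u(t)$, which is not a function of $x_u$ alone. So the weak-limit argument must be run for the pair $(x_u,R_u)$. That pair is again a control-affine system, whose vector fields are Lipschitz on the bounded region containing all the trajectories (by Lemma~\ref{lem:estimate of the STM R_u} and the bounds on $D_x^2N_t$, $D_x^2B_t$). With that, the step goes through.

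Your fallback is not a substitute for this step. Without an attained minimum there is nothing to which Lemma~\ref{lem:minimizer-is-fp} can be applied, so existence is indispensable in either route.
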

\begin{remark}
	Formula~\eqref{eq:control with G_i} is consistent with the PMP equations derived in~\cite{sastry1993structure}. Under the additional
		time-regularity required to differentiate, setting
		$g_\ell(t,x):=B_t(x)e_\ell$ and
		$p_i(t):=Q_{\bar u_i,\tau_i}(T,t)^\top
		\cG_{\tau_i}(\bar u_i)^{-1}y_i$, one has
		$\bar u_{i,\ell}(t)=\langle p_i(t),g_\ell(t,x_{\bar u_i}(t))\rangle$
		and hence
	\[
		\dot{\bar u}_{i,\ell}
		=
		\sum_{m=1}^k
		\langle p_i,[g_m,g_\ell]\rangle\bar u_{i,m}
		+
		\langle p_i,\partial_tg_\ell+[N_t,g_\ell]\rangle
	\]
	where $[f,h]$ denote the Lie-bracket of $f$ and $h$. Thus, when $N_t\equiv0$ and $g_\ell$ is autonomous, the forcing
		term vanishes and $\dot{\bar u}_i=\Omega(t)\bar u_i$ with
		$\Omega^\top(t)=-\Omega(t)$, so $|\bar u_i(t)|$ is constant.
	
\end{remark}

\begin{proof}[Proof of Theorem~\ref{thm:main controllability result nonlinear}]
	Since $\bar u_i$ is the fixed point of $\cZ_i$, we have
	$\bar u_i=DF_{\tau_i}(\bar u_i)^\ast z_i$, with
	$z_i=\cG_{\tau_i}(\bar u_i)^{-1}y_i$. Hence
	\[
	L_{\bar u_i,\tau_i}\bar u_i
	=L_{\bar u_i,\tau_i}DF_{\tau_i}(\bar u_i)^\ast z_i
	=\cG_{\tau_i}(\bar u_i)z_i
	=y_i,
	\]
	and therefore $\bar u_i$ steers~\eqref{eq:state-dependent-input} from $x^0$ to $x^1$. Formula~\eqref{eq:control with G_i} follows from the expression of $DF_{\tau_i}(\bar u_i)^\ast$. Moreover,
	\[
	\|\bar u_i\|_{L^2}^2
	=\langle DF_{\tau_i}(\bar u_i)^\ast z_i,DF_{\tau_i}(\bar u_i)^\ast z_i\rangle_{L^2}
	=z_i^\top\cM_{\tau_i}(\bar u_i)z_i,
	\]
	which proves~\eqref{eq:energy certificate G_i}.
	
	To prove global minimality, \cite[Proposition~3.14]{tamekue2025controlanalysis} guarantees the existence of at least one minimizer $\bar u\in\mathfrak F_{y_i}$ of $\tfrac12\|u\|_{L^2}^2$. Any such minimizer is, in particular, a local minimizer. By Lemma~\ref{lem:minimizer-is-fp}, it satisfies the fixed-point relation $\bar u=\cZ_i(\bar u)$. By Lemma~\ref{lem:abstract-fixed-point} and~\eqref{eq:Zi-iterate-contractivity}, the fixed point of $\cZ_i$ in $\cF_{\eta_{y_i}}$ is unique under the additional iterative-contractivity condition; hence $\bar u=\bar u_i$. This proves that $\bar u_i$ is the unique global minimizer on $\mathfrak F_{y_i}$.
\end{proof}

\begin{remark}\label{rem:energy-gap}
	Let $\bar u_i\in\cF_{\eta_{y_i}}$ be the minimum-energy steering control from Theorem~\ref{thm:main controllability result nonlinear}, and let $u_i\in\cF_{\eta_{y_i}}$ be the steering control from~\cite[Theorem~3.15]{tamekue2025controlanalysis}, as given by
	\begin{equation}\label{eq:almost optimal control}
		u_i=\cS_i(u_i):=L_{u_i,\tau_i}^\ast\,\cN_{\tau_i}(u_i)^{-1}y_i.
	\end{equation}
	Then $\|\bar u_i\|_{L^2}^2\le \|u_i\|_{L^2}^2$
	with equality iff $u_i=\bar u_i$. In particular, they coincide under the orthogonality condition in~\cite[Theorem~3.13]{tamekue2025controlanalysis}, and in particular in the linear case.
\end{remark}

\section{Examples}\label{s:Appli and numerics}

We develop examples that enable a comparative assessment of the matrices
$\cM_{\tau_2}=\cM_T$, $\cN_{\tau_2}=\cN_T$, and $\cG_{\tau_2}=\cG_T$ in Definition~\ref{def:Gramians}. Section~\ref{ss:applications} introduces the model forms considered, verifies the feasibility of the control synthesis, and finally, Section~\ref{ss:NI} numerically enacts the synthesis. 

\subsection{Models considered and verification of control feasibility}\label{ss:applications}

\subsubsection{Planar control-affine systems with scalar inputs}\label{sss:planar control-affine with scalar inputs}

We specialize in $d=2$ and $k=1$. We work under Assumption~\ref{ass:on N_t} with in addition $\partial_tDN_t(\cdot)\in\cY:=L^\infty((t_0,T);\R^2)$, and $B_t(x)\equiv B_t$ is state-independent with $B_t,\dot B_t,\ddot B_t\in\cY$.

\begin{proposition}\label{pro:ubg_2d_Mu}
	Assume
	\begin{equation}\label{eq:ubg_2d_bracket}
		\inf_{t\in[t_0,T]}\inf_{x\in\R^2}
		\big|\det\big(B_t,\ DN_t(x)B_t-\dot B_t\big)\big|\ge \beta>0.
	\end{equation}
	Fix $x^0\in\R^2$ and $U>0$. There is $\alpha_{\cM_T}(\beta,|x^0|,U)>0$ s.t.
	\begin{equation}\label{eq:coercivity_Mu_bounded}
		\cM_T(u)\succeq \alpha_{\cM_T}\idty,\quad
		\forall u\in L^\infty(t_0,T),\; \|u\|_\infty\le U.
	\end{equation}
\end{proposition}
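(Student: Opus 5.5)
The strategy is to reduce the coercivity of $\cM_T(u)$ to uniform complete controllability of the linearized LTV system along the trajectory $x_u$, and then to invoke the relaxed UCC criterion of~\cite{batista2017relaxed}.

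\textbf{Step 1: rewrite $\cM_T(u)$ as an LTV Gramian.} For $\tau_2=T$ we have $D\Phi_{T,T}=\idty$, so $Q_{u,T}(T,t)=R_u(T,t)$ and, by~\eqref{eq:identities}--\eqref{eq:adjoint},
\[
\cM_T(u)=\int_{t_0}^T R_u(T,t)B_tB_t^\top R_u(T,t)^\top\,dt .
\]
Since $B_t$ does not depend on the state, $R_u$ is the STM of $\dot y=A_u(t)y$ with $A_u(t):=DN_t(x_u(t))$; the control enters only through the trajectory. Thus $\cM_T(u)$ is exactly the controllability Gramian of the LTV pair $(A_u,B)$ on $[t_0,T]$.

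\textbf{Step 2: uniform bounds on the pair.} I first bound the trajectory. Gr\"onwall with $|N_t(x)|\le |N_t(0)|+\Lambda_1|x|$ gives $|x_u(t)|\le \rho(|x^0|,U)$ on $[t_0,T]$. Then $\|A_u\|_\infty\le\Lambda_1$. Next,
\[
\dot A_u=\partial_tDN_t(x_u)+D^2N_t(x_u)\big[N_t(x_u)+B_tu\big],
\]
which is bounded by a constant depending only on $\rho$, $U$, $\Lambda_2$, $\|B\|_\infty$ and $\|\partial_tDN\|_\infty$. The quantities $B,\dot B,\ddot B$ are bounded by hypothesis. Hence the Silverman--Meadows type controllability matrix
\[
\mathcal K_u(t)=\big(B_t,\ A_u(t)B_t-\dot B_t\big)
\]
is well defined with uniformly bounded entries and derivative. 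Assumption~\eqref{eq:ubg_2d_bracket} gives $|\det \mathcal K_u(t)|\ge\beta$ for all $t$, uniformly in $u$, because it is an infimum over all $x$.

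\textbf{Step 3: quantitative UCC.} I expect this to be the main obstacle: turning the pointwise rank bound into a quantitative Gramian lower bound whose constant depends only on $(\beta,\rho,U)$ and the data, not on $u$. I would first try to cite~\cite{batista2017relaxed} directly. Its relaxed condition states that a uniform lower bound on $|\det\mathcal K(t)|$, together with uniform bounds on $A,\dot A,B,\dot B,\ddot B$, yields $W(t,t+\delta)\succeq\alpha\idty$ with explicit $\alpha,\delta$ depending only on those bounds.

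If I need to argue it myself, I would proceed by contradiction. Suppose $z^\top\cM_T(u)z$ is small for some unit $z$. Set $\phi(t)=B_t^\top R_u(T,t)^\top z$. Then $\phi$ is small in $L^2$, while $\dot\phi=-(A_uB-\dot B)^\top R_u^\top z$ has bounded derivative. An interpolation (Landau-type) inequality then makes $\dot\phi$ small in $L^2$ as well. Hence $\mathcal K_u(t)^\top R_u(T,t)^\top z$ is small on a set of positive measure, contradicting $|\det\mathcal K_u|\ge\beta$ together with $\|R_u^{-1}\|\le e^{\Lambda_1\delta t_0}$ from Lemma~\ref{lem:estimate of the STM R_u}.

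\textbf{Step 4: pass from the local window to $[t_0,T]$.} The local bound lives on a window of length $\delta\le\delta t_0$. I would use
\[
\cM_T\succeq R_u(T,t_1)\,W(t_1-\delta,t_1)\,R_u(T,t_1)^\top
\]
with $\|R_u(T,t_1)^{-1}\|\le e^{\Lambda_1\delta t_0}$. This gives $\alpha_{\cM_T}=\alpha e^{-2\Lambda_1\delta t_0}$, which proves~\eqref{eq:coercivity_Mu_bounded}.
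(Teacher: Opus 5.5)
Your proposal is correct and follows essentially the same route as the paper: write $\cM_T(u)$ as the Gramian of the linearized pair $(DN_t(x_u(t)),B_t)$, bound $\big[B_t,\ A_u(t)B_t-\dot B_t\big]$ and its time derivative uniformly over $\|u\|_\infty\le U$ using a Gr\"onwall bound on $x_u$, turn $|\det|\ge\beta$ into uniform pointwise nondegeneracy, and apply Theorem~5 of Batista et al. Your Step~4 and the interpolation-based fallback in Step~3 add detail the paper leaves implicit; the fallback, carried out quantitatively on all of $[t_0,T]$, also avoids having to check that the cited theorem's window fits inside $[t_0,T]$.
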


	The proof of Proposition~\ref{pro:ubg_2d_Mu} is given in Section~\ref{ss:proof of ubg_2d}. Thus, fixing $x^0\in\R^2$ and $U>0$, and setting $C_2:=\alpha_{\cM_T}(\beta,|x^0|,U)^{-1}$ with the feasible class in~\eqref{eq:gd feasible coercive class} defined using $\Pi=\cM_T$, one has
	\[
	\cU_U:=\{u\in L^\infty(t_0,T):\|u\|_\infty\le U\}
	\subset \cF(C_2).
	\]
	Whenever the self-mapping, uniform invertibility set~\eqref{eq:finite-dim invertibility} is nonempty, and iterate-contractivity conditions in Theorem~\ref{thm:main controllability result nonlinear} are verified, Theorem~\ref{thm:main controllability result nonlinear} applies.

\subsubsection{Fully-actuated control-affine systems}\label{sss:FA system}

Throughout, we consider~\eqref{eq:state-dependent-input} with $k=d$. We work under Assumptions~\ref{ass:on N_t} and~\ref{ass:on B_t} with the pointwise invertibility estimate
\begin{equation}\label{eq:not pointwise invertibility of B_t}
	\sigma_{\min}(B_t(x)) \ge\ b(t),\quad\text{for a.e. } t\in(t_0,T),\,\forall x\in\R^d
\end{equation}
for some nonzero $b\in L^1((t_0,T);\R_+)$.

Under~\eqref{eq:not pointwise invertibility of B_t}, \cite[Proposition~3.19]{tamekue2025controlanalysis} ensures that
	\begin{equation}\label{eq:coercivity bound full actuated case}
		\cN_T(u) \succeq {e^{-2\Lambda_1\delta t_0}\,\|b\|_1^2}{\delta t_0}^{\!\!-1},\quad \forall (x^0,\,u)\in\R^d\times\cY.
	\end{equation}

By~\eqref{eq:coercivity bound full actuated case}, the feasible coercive class is nonempty in the fully actuated setting. Consequently, Theorem~\ref{thm:main controllability result nonlinear} applies under hypothesis~\eqref{eq:not pointwise invertibility of B_t} whenever the uniform invertibility set~\eqref{eq:finite-dim invertibility} is nonempty and iterate-contractivity condition is verified.

For steering alone,~\eqref{eq:not pointwise invertibility of B_t} also yields
\begin{equation}\label{eq:feedback linearization fullactuation}
	u_{\mathrm{fl}}(t)=B_t(x(t))^{-1}\!\left({(x^1-x^0)}/{\delta t_0}-N_t(x(t))\right),
\end{equation}
used as a baseline in Fig.~\ref{fig:control_comparison}. Thus, here the added value is the minimum-energy guarantee.

\subsubsection{Unicycle kinematic model}\label{sss:unicycle}

Consider the unicycle
\begin{equation}\label{eq:unicycle}
	\dot p_1=v\cos\theta,\quad\dot p_2=v\sin\theta,\quad\dot\theta=\omega,\;\, t\in[0,T],
\end{equation}
where $ x=(p,\theta)\in\R^2\times\S^1$. Eq.~\eqref{eq:unicycle} is controllable on $[0,T]$; see~\cite{siciliano2009robotics}. Direct computation yields 
\begin{equation}\label{eq:unicycle important}
	\cN_T(u)=\begin{pmatrix}G_\omega&0\\0&T\end{pmatrix},\quad
	\cG_T(u)=\begin{pmatrix}G_\omega&0\\ \delta p(T)^\top&T\end{pmatrix},
\end{equation}
where $G_\omega:=H_\omega(0)$, and we let
\[
\begin{gathered}
	H_\omega(t):=\int_t^T\!\!\! g_\omega(s)g_\omega(s)^\top ds,\qquad
	g_\omega(s):=\binom{\cos\theta(s)}{\sin\theta(s)},\\
	\delta p(T):=J\int_0^T\!\!\!(p(t)-p(T))\,dt,\quad J:=\begin{pmatrix}
		0 & 1\\
		-1 & 0
	\end{pmatrix}.
\end{gathered}
\]
Moreover, one has
\begin{equation}\label{eq:unicycle important eig}
	\lambda_{\min}(G_\omega)
	=\frac{T-\left|\int_0^T e^{2i\int_0^t\omega(s)\,ds}\,dt\right|}{2}
\end{equation}
so that $\cN_T(u)$ and $\cG_T(u)$ are invertible iff
\begin{equation}\label{eq:set I of invertibility unicycle}
	u\in\left\{(v,\omega)^\top\in\cY:\omega\neq 0\text{ in }L^1(0,T)\right\}.
\end{equation}

For $x^k=(p^k,\theta^k)$, $k=0,1$, 
the minimum-energy control $\bar u_2=(\bar v_2,\bar\omega_2)$ from $x^0$ to $x^1$, as given by~\eqref{eq:control with G_i} recasts as
	\begin{equation}\label{eq:MEC unicycle}
		\begin{gathered}
			\bar v_2(t) = g_{\bar\omega_2}(t)^\top G_{\bar\omega_2}^{-1}\delta p,\\
			\bar\omega_2(t) = \delta p^\top\begin{bmatrix}
				G_{\bar\omega_2}^{-1}(H_{\bar\omega_2}(t)-\bar H_{\bar\omega_2})JG_{\bar\omega_2}^{-1}
			\end{bmatrix}\delta p+\frac{\delta\theta}{T},
		\end{gathered}
	\end{equation}
	where $\bar H_\omega$ is the average of $H_\omega$ on $[0,T]$, $\delta p:=p^1-p^0$ and $\delta\theta:=\theta^1-\theta^0$. Note that $\bar v_2$ is explicitly given by~\eqref{eq:MEC unicycle} once $\bar\omega_2$ is obtained as a fixed point of the induced scalar map.
In particular, Sastry--Montgomery's structure~\cite{sastry1993structure} is recovered,
\[
	\dot{\bar u}_2(t)
	=
	\bigl(g_{\bar\omega_2}(t)^\top
	JG_{\bar\omega_2}^{-1}\delta p\bigr)J\bar u_2(t),
	\qquad\frac{d}{dt}|\bar u_2(t)|^2=0.
\]
Likewise, the steering control~\eqref{eq:almost optimal control} recasts explicitly as
	\begin{equation}
		\hspace{-0.25cm}v_2(t) = g_{\omega_2}(t)^\top G_{\omega_2}^{-1}\delta p,\;\,\begin{cases}
			\omega_2(t) = {\delta\theta}/{T},\;\,\delta\theta\neq 0,\\
			\int_0^T\!\!\omega_2(t)\,dt=0,\;\,\delta\theta=0.
		\end{cases}
	\end{equation}

 Note that~\eqref{eq:unicycle} also admits feedback-linearization steering controls $(v_{\mathrm{fl}},\omega_{\mathrm{fl}})$ from $(p^0,\theta^0)$ to $(p^1,\theta^1)$, namely
	\begin{equation}\label{eq:feedback-linearization unicycle}
		v_{\mathrm{fl}}(t) = \pm|\dot p(t)|,\quad\omega_{\mathrm{fl}}(t) = {\langle\dot p(t),Ja(t)\rangle}/{|\dot p(t)|^2 },
	\end{equation}
	where $a(t)=(a_1(t),a_2(t))^\top\in\R^2$ is the linear Gramian-based control, steering $\ddot p(t) = a(t)$ from $p^0\in\R^2$ to $p^1\in\R^2$. The control $u_{\mathrm{fl}}=(v_{\mathrm{fl}},\omega_{\mathrm{fl}})$ is used as a baseline in Fig.~\ref{fig:control_comparison}.

Finally, PMP/Jacobi-elliptic minimum-energy formulas for~\eqref{eq:unicycle} appear in~\cite{kim2021minimum}, for a free terminal heading.  Here, the angular target is prescribed through a chosen lift of $\delta\theta$.

\subsection{Numerical illustrations}\label{ss:NI}
This section presents numerical results supporting our framework. The simulations use adaptive Dormand--Prince integration, numerical quadrature for the trajectory-dependent Gramians, and standard linear solvers for the resulting $d\times d$ systems. Extended algorithmic and scalability details are provided in the companion manuscript~\cite{yu2026toolbox}.

\begin{figure}[t]
\centering
\begin{tikzpicture}[scale=0.75]
	\begin{groupplot}[
		group style={
			group size=1 by 2,
			vertical sep=1.1cm
		},
		width=\columnwidth,
		height=4.7cm,
		xmin=0.5, xmax=6.5,
		xtick={1,2,3,4,5,6},
		xticklabels={P1,P2,3D-1,3D-2,U1,U2},
		x tick label style={font=\footnotesize},
		tick label style={font=\footnotesize},
		label style={font=\footnotesize},
		ymajorgrids=true,
		grid style={dashed,gray!30},
		every axis plot/.append style={only marks, mark size=2.3pt},
		legend style={
			font=\footnotesize,
			at={(0.5,1.18)},
			anchor=south,
			legend columns=3,
			draw=none,
			/tikz/every even column/.append style={column sep=0.35cm}
		}
		]
		
		\nextgroupplot[
		ylabel={$\frac12\|u\|_{L^2}^2$},
		ymode=log,
		log basis y={8},
		ymin=1, ymax=900,
		ytick={1,8,64,512},
		yticklabels={$8^0$,$8^1$,$8^2$,$8^3$},
		]
		
		\draw[gray!55] (axis cs:2.5,1) -- (axis cs:2.5,512);
		\draw[gray!55] (axis cs:4.5,1) -- (axis cs:4.5,512);
		\draw[gray!55] (axis cs:6.5,1) -- (axis cs:6.5,512);
		
		\node[font=\scriptsize] at (axis cs:1.5,670) {Pendulum};
		\node[font=\scriptsize] at (axis cs:3.5,670) {3D RNN};
		\node[font=\scriptsize] at (axis cs:5.5,670) {Unicycle};
		
		\addplot+[mark=*, black]
		coordinates {
			(1.00,23.78) (2.00,76.61) 
			(3.00,65.91) (4.00,10.22) (5.00,1.16) (6.00,98.75)
		};
		\addlegendentry{$\bar u_2$}
		
		\addplot+[mark=diamond*, black!65]
		coordinates {
			(1.00,24.11) (2.00,76.66) 
			(3.00,68.13) (4.00,10.60) (5.00,1.17) (6.00,98.75)
		};
		\addlegendentry{$u_2$}
		
		\addplot+[mark=triangle*, black!35]
		coordinates {
			(1.00,32.67) (2.00,85.60)
			(3.00,82.09) (4.00,129.92)
			(5.00,1.34) (6.00,367.00)
		};
		\addlegendentry{$u_{\mathrm{fl}}$}
		
		
		\nextgroupplot[
		ylabel={$\|u\|_{L^\infty}$},
		ymode=log,
		ymin=1, ymax=1000,
		ytick={1,10,100,1000},
		yticklabels={$10^0$,$10^1$,$10^2$,$10^3$},
		]
		
		\draw[gray!55] (axis cs:2.5,1) -- (axis cs:2.5,1000);
		\draw[gray!55] (axis cs:4.5,1) -- (axis cs:4.5,1000);
		\draw[gray!55] (axis cs:6.5,1) -- (axis cs:6.5,1000);
		
		\addplot+[mark=*, black]
		coordinates {
			(1.00,9.96) (2.00,26.56) 
			(3.00,16.07) (4.00,3.81) (5.00,1.49) (6.00,62.85)
		};
		
		\addplot+[mark=diamond*, black!65]
		coordinates {
			(1.00,9.16) (2.00,26.30) 
			(3.00,15.07) (4.00,4.34) (5.00,1.31) (6.00,62.83)
		};
		
		\addplot+[mark=triangle*, black!35]
		coordinates {
			(1.00,12.84) (2.00,25.58)
			(3.00,10.68) (4.00,9.50)
			(5.00,2.04) (6.00,461.45)
		};
		
		
	\end{groupplot}
\end{tikzpicture}
\caption{Comparison of the steering controls $\bar u_2$, $u_2$, and $u_{\mathrm{fl}}$ over six transfer problems. In all cases, the error $\|x_u(T)-x^1\|_2$ is negligible, ranging from $10^{-8}$ to $10^{-15}$. We denote by P1--P2 the two pendulum transfers, by 3D-1--3D-2 the two 3D RNN transfers, and by U1--U2 the two unicycle transfers.}
\label{fig:control_comparison}
\end{figure}


\subsubsection{Torque-controlled pendulum with varying length}\label{ss:TCP with vl}

We consider the nondimensional equation of a torque-controlled pendulum with varying length~\cite[Section~2]{belyakov2009pendulum},
\begin{equation}\label{eq:TCP-abstract}
N_t(x)=\begin{bmatrix}x_2\\-a(t)\sin x_1-\gamma(t)x_2\end{bmatrix},
\;\,
B_t(x)=\begin{bmatrix}0\\b(t)\end{bmatrix},
\end{equation}
where
$
b(t)=\bigl(1+0.5\cos(t)\bigr)^{-2}$, $a(t)=\lambda^2\sqrt{b(t)}$,
$\gamma(t)=- b(t)\sin(t)+\beta\lambda$ with $\lambda=0.78$ and $\beta=0.13$.

For all $(t,x)\in[t_0,T]\times\R^2$, it holds that
\[
\big|\det\big(B_t,\ DN_t(x)B_t-\dot B_t\big)\big|=b(t)^2\ge 2^{-4}
\]
and Proposition~\ref{pro:ubg_2d_Mu} applies. Thus, our framework yields the associated minimum-energy control under further standing assumptions. 

Finally, system~\eqref{eq:TCP-abstract} is feedback-linearizable via
\begin{equation}\label{eq:feedback-linearization}
u_{\mathrm{fl}}(t)=({v(t)+\gamma(t)x_2(t)+a(t)\sin x_1(t)})/{b(t)},
\end{equation}
where $v$ is the linear Gramian-based control steering the double integrator $\ddot\theta=v$ from $(\theta^0,\omega^0)^\top$ to $(\theta^1,\omega^1)^\top$. Thus, $u_{\mathrm{fl}}$ provides a steering baseline and is used in Fig.~\ref{fig:control_comparison}.

\subsubsection{Fully-actuated 3D recurrent neural network (RNN)}\label{sss:3DHRNN with scalar input}
We present in Fig.~\ref{fig:control_comparison} simulations of the fully actuated $3$D RNN
with $B_t(x)\equiv\idty$. Namely,
\begin{equation}\label{eq:3D example HRNN}
\dot x(t) = N(x(t))+u(t),\quad x(0) = x^0,\quad t\in[0,T],
\end{equation}
where $N(x):=-Dx+W\sigma(x)$. We use the parameters
\begin{equation*}\label{eq:parameters values for simulations 3DHRNN}
\begin{gathered}
	D = \operatorname{diag}(1.25,1.5,1),\quad W = \begin{pmatrix}
		3 & 1 & -0.5\\
		2 & 1 & 0.5\\
		0 & -1.5 & 1.25
	\end{pmatrix},\\
	\sigma_j(s)=(1+e^{-s})^{-1},\;\sigma(x) = (\sigma_j(x_j))_{1\le j\le 3}^\top.
\end{gathered}
\end{equation*}
The analysis of Section~\ref{sss:FA system} applies to this example.

\section{CONCLUSIONS}\label{s:conclusion} 

We developed a minimum-energy synthesis for possibly nonautonomous control-affine systems with time- and state-dependent input matrices. Relative to~\cite{tamekue2025controlanalysis}, the new element is the Lagrange-multiplier fixed-point map and the mixed Gramian that characterize the $L^2$-minimum over feasible coercive classes. This differs from specialized steering methods such as feedback linearization, which may produce explicit controls when applicable but does not by itself solve the original fixed-horizon minimum energy problem, and from MPC, which relies on a chosen discretization and numerical optimization scheme. The present approach gives an analytic Gramian-based fixed-point synthesis with an intrinsic energy certificate under verifiable uniformly invertibility and self-map conditions; uniqueness and Picard convergence are certified by an additional iterate-contractivity check.

Extending the coercivity framework to high-dimensional underactuated systems is an open mathematical challenge, even as empirical computation scales effectively~\cite{yu2026toolbox}.





\appendices

\section{Additional Proofs}\label{s:proofs of technical results}


\subsection{Proof of Lemma~\ref{lem:minimizer-is-fp}}\label{s:minimum-energy control}
Since $\cG_{\tau_i}(\bar u)=L_{\bar u,\tau_i}DF_{\tau_i}(\bar u)^\ast$ is invertible, $DF_{\tau_i}(\bar u)$ is onto. Therefore, the Lagrange multiplier theorem in Hilbert spaces~\cite[Theorem 43.D, p.~290]{zeidler2013nonlinear} applied to $\tfrac12\|u\|_2^2$ and the submersion $F_{\tau_i}$ gives $\lambda\in\R^d$ such that
\begin{equation}\label{eq:u bar}
	\bar u=DF_{\tau_i}(\bar u)^\ast\lambda,
	\qquad
	L_{\bar u,\tau_i}\bar u=y_i.
\end{equation}
Left-multiplying the first identity in~\eqref{eq:u bar} by $L_{\bar u,\tau_i}$ and using the second one yields $\lambda=\cG_{\tau_i}(\bar u)^{-1}y_i$.
Substituting back gives $\bar u=DF_{\tau_i}(\bar u)^\ast\cG_{\tau_i}(\bar u)^{-1}y_i=\cZ_i(\bar u)$.
\qed

\subsection{Proof of Proposition~\ref{pro:ubg_2d_Mu}}\label{ss:proof of ubg_2d}

\begin{proof}
We apply~\cite[Theorem~5]{batista2017relaxed}. Recall that
\[
\cM_T(u)=\int_{t_0}^T R_u(T,t)B_tB_t^\top R_u(T,t)^\top\,dt,
\]
where $R_u(t,s)$ is the state-transition matrix of $\dot y(t)=A_u(t)y(t)$ with $A_u(t):=DN_t(x_u(t))$. Define

\[
\begin{gathered}
M_0(t):=B_t,\qquad M_1(t):=A_u(t)B_t-\dot B_t,\\
\cC_u(t):=\begin{bmatrix}M_0(t)&M_1(t)\end{bmatrix}.
\end{gathered}
\]

Using the uniform bounds on $DN_t$, $\partial_tDN_t$, $D^2N_t$, $B_t$, $\dot B_t$, and $\ddot B_t$, one obtains
\[
\|\cC_u(t)\|\le C_0,\qquad \operatorname{Lip}(t\mapsto\mathcal C_u(t))\le C_{|x^0|,U},
\]
for some constants $C_0>0$ and $C_{|x^0|,U}>0$, uniformly for all $u\in L^\infty(t_0,T)$ with $\|u\|_\infty\le U$. Here, the Lipschitz estimate uses, for some constant $V_{|x^0|,U}>0$, the bound
\[
|\dot x_u(t)|\le V_{|x^0|,U},\qquad t\in[t_0,T],
\]
by Gr\"onwall lemma.
Moreover,~\eqref{eq:ubg_2d_bracket} yields
\[
\sigma_{\min}(\cC_u(t))
=\frac{|\det \cC_u(t)|}{\sigma_{\max}(\cC_u(t))}
\ge \frac{\beta}{C_0}=:\alpha,
\]
and therefore $\cC_u(t)\cC_u(t)^\top\succeq \alpha^2\idty$ for all $t\in[t_0,T]$.

Thus $t\mapsto\cC_u(t)$ is uniformly bounded, uniformly Lipschitz, and uniformly pointwise nondegenerate on $\{u\in L^\infty(t_0,T):\|u\|_\infty\le U\}$. Applying~\cite[Theorem~5]{batista2017relaxed} gives $\cM_T(u)\succeq \alpha_{\cM_T}\idty$
for some $\alpha_{\cM_T}(\beta,|x^0|,U)>0$.  
\end{proof}


\end{document}